\definecolor{red}{rgb}{1,0,0}
\definecolor{blue}{rgb}{0,0,1}
\definecolor{green}{rgb}{0,.6,0}
\newtheorem{thm}{Theorem}[section]
\newtheorem{lem}[thm]{Lemma}
\newtheorem{prop}[thm]{Proposition}
\newtheorem{obs}[thm]{Observation}
\newtheorem*{main1}{Theorem 1.1}
\newtheorem*{main2}{Theorem 1.2}
\theoremstyle{definition}
\theoremstyle{definition}
\theoremstyle{definition}
\newtheorem{rem}[thm]{Remark}
\theoremstyle{definition}
\theoremstyle{definition}
\newcommand{\Z}{\mathbb{Z}}
\newcommand{\rb}{\text{rb}}
\newcommand{\bit}{\begin{itemize}}
\newcommand{\eit}{\end{itemize}}
\newcommand{\ben}{\begin{enumerate}}
\newcommand{\een}{\end{enumerate}}
\newcommand{\beq}{\begin{equation}}
\newcommand{\eeq}{\end{equation}}
\newcommand{\bea}{\begin{eqnarray}} 
\newcommand{\eea}{\end{eqnarray}}
\newcommand{\bpf}{\begin{proof}}
\newcommand{\epf}{\end{proof}\ms}
\newcommand{\bmt}{\begin{bmatrix}}
\newcommand{\emt}{\end{bmatrix}}
\newcommand{\ms}{\medskip}
\newcommand{\beqs}{\begin{equation*}} % * means no number
\newcommand{\eeqs}{\end{equation*}}
\newcommand{\beas}{\begin{eqnarray*}}
\newcommand{\eeas}{\end{eqnarray*}}
\title{Rainbow Solutions to the Sidon Equation in Cyclic Groups}
\author{Zhanar Berikkyzy\thanks{Mathematics Department, Fairfield University, Fairfield, CT, USA (zberikkyzy@fairfield.edu)}\and 
J\"urgen Kritschgau\thanks{Dept.~of Mathematics, Iowa State University, Ames, IA, USA (jkritsch@iastate.edu) Research is supported by NSF grant DMS-1839918} }
\begin{document}

\maketitle

\begin{abstract}
Given a coloring of group elements, a rainbow solution to an equation is a solution whose every element is assigned a different color.
The rainbow number of $\mathbb{Z}_n$ for an equation $eq$, denoted $rb(\mathbb{Z}_n,eq)$, is the smallest number of colors $r$ such that every exact $r$-coloring of $\mathbb{Z}_n$ admits a rainbow solution to the equation $eq$.
We prove that for every exact $4$-coloring of $\mathbb{Z}_p$, where $p\geq 3$ is prime, there exists a rainbow solution to the Sidon equation $x_1+x_2=x_3+x_4$. 
Furthermore, we determine the rainbow number of $\mathbb{Z}_n$ for the Sidon equation. 

\end{abstract}
\section{Introduction}
An {\it $r$-coloring} of a set $X$ is a function $c:X\rightarrow [r]$, where $[r]=\{1,2,\ldots,r\}$.
An $r$-coloring is {\it exact} if the function $c$ is surjective. 
In this paper we focus on $r$-colorings of the cyclic group $\Z_n$, and all $r$-colorings are assumed to be exact. 
A subset $A\subseteq \mathbb{Z}_n$ is called {\it rainbow} (under the coloring $c$) if each element of $A$ is colored distinctly. 
Given an equation $eq$, the {\it rainbow number} of $\Z_n$ for $eq$, denoted $\rb(\Z_n,eq)$, is the smallest positive integer $r$ such that every (exact) $r$-coloring of $\Z_n$ has a rainbow solution to $eq$. 
By convention, if no such integer $r$ exists we set $\rb(\Z_n,eq)=n+1$. 
We say that a coloring $c$ is \emph{rainbow $eq$ free} if there is no rainbow solution to $eq$ under $c$.

Rainbow numbers of $\Z_n$ and $[n]$ for the equation $x_1+x_2=2x_3$, for which the solutions are $3$-term arithmetic progressions, are known as anti-van der Waerden numbers. 
Rainbow arithmetic progressions have been studied extensively in \cite{AF, BSY, BEHHKKLMSWY,JLMNR,Y}. 
Generalizing the equation $x_1+x_2=2x_3$, Bevilacqua et. al. in \cite{BKKTTY} determined the rainbow number of $\Z_n$ for the equation $x_1+x_2=kx_3$ when $k=1$ (which is known as the Schur equation) or when $k$ is prime. 
Ansaldi et.al. determined the rainbow number of $\Z_p$ for the equation $a_1x_1+a_2x_2+a_3x_3=b$, and established the rainbow number of $\Z_n$ for this equation under certain conditions on the coefficients in \cite{AEHNWY}. 
Structures of rainbow-free colorings for various equations have been studied in \cite{HM, LM, MS}.

In this paper, we establish the rainbow number of $\Z_n$ for the Sidon equation $x_1+x_2=x_3+x_4$. 
The Sidon equation is a classical object in additive number theory and is used to measure the additive energy of a set (see \cite{O,TV}). 
Fox, Mahdian, and Radoi\^{c}i\'{c} showed in \cite{FMR} that for every $4$-coloring of $[n]$, where each color class has cardinality more than $\frac{n+1}{6}$, there exists a rainbow solution to the Sidon equation. 
The lower bound on a color class cardinality is tight. 
Taranchuk and Timmons in \cite{TT} studied the maximum number of rainbow solutions to the Sidon equation for a fixed number of colors.

In this paper we denote the Sidon equation by $S$. We determine $\rb(\Z_p,S)$, where $p$ is prime in Section \ref{Zp}. 
Furthermore, we determine $\rb(\Z_n,S)$ in Section \ref{Zn}. 
Notice that $\rb(\Z_2,S)=3$ and $\rb(\Z_3,S)=4$ by convention. 
Our main results are as follows: 

\begin{thm}\label{main1}
Let $p\geq 3$ be a prime. Then $rb(\Z_p,S)=4$.
\end{thm}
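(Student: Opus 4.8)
The lower bound is immediate: for any $p\ge 3$ the group $\Z_p$ admits an exact $3$-coloring, and since a rainbow solution of $S$ consists of four elements carrying four \emph{distinct} colors, no $3$-coloring can contain one. Hence not every exact $3$-coloring has a rainbow solution, so $\rb(\Z_p,S)\ge 4$. For the matching upper bound I must show every exact $4$-coloring of $\Z_p$ has a rainbow solution. When $p=3$ there is no exact $4$-coloring, so the statement holds vacuously; thus I assume $p\ge 5$ and suppose toward a contradiction that $c$ is a rainbow $S$ free exact $4$-coloring with color classes $C_1,C_2,C_3,C_4$ of sizes $n_1,\dots,n_4\ge 1$ and $\sum_i n_i=p$. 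The first step is to translate rainbow freeness into additive language: a rainbow solution is a choice of one element from each class realizing $a+b=c+d$ under some partition of the four colors into two pairs. Writing $S_{ij}=C_i+C_j$, rainbow freeness is precisely the three disjointness conditions $S_{12}\cap S_{34}=S_{13}\cap S_{24}=S_{14}\cap S_{23}=\varnothing$. I will use freely that the affine maps $x\mapsto\lambda x+t$ with $\lambda\ne 0$ permute colorings while preserving both $S$ and rainbow freeness, so the configuration may be normalized later.

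The second step pins down the sizes of all six sumsets. If some $S_{ij}=\Z_p$, it meets the nonempty complementary sumset, a contradiction; hence every $S_{ij}$ is a proper subset, and Cauchy--Davenport gives $|S_{ij}|\ge n_i+n_j-1$. Combining each disjointness relation $|S_{ij}|+|S_{kl}|\le p$ with $\sum_i n_i=p$ forces, for every pair, $n_i+n_j-1\le|S_{ij}|\le n_i+n_j+1$, and moreover the two ``excesses'' $|S_{ij}|-(n_i+n_j-1)$ belonging to complementary pairs sum to at most $2$. In other words every pairwise sumset lies within an additive constant of the Cauchy--Davenport minimum, so each class behaves like a near-arithmetic progression.

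The third step is the crux, and where I expect the main difficulty. Using Vosper's theorem for the tight pairs $|S_{ij}|=n_i+n_j-1$ with $n_i,n_j\ge 2$, together with a Freiman $3k-4$ type classification in $\Z_p$ to absorb the excess values $|S_{ij}|\in\{n_i+n_j,\,n_i+n_j+1\}$, I will argue that every class of size at least two is an arithmetic progression and that these progressions share a single common difference $d$ (singletons being progressions trivially). The obstacle is exactly the reconciliation of the pairwise common differences into one global $d$, the handling of size profiles containing singletons (where Vosper does not apply), and the $+1,+2$ slack. I plan to organize this as a short case analysis on the number of singleton classes, exploiting that when $C_j=\{g\}$ the sumset $S_{ij}=C_i+g$ is automatically a translate of $C_i$, which both removes the excess and simplifies the complementary disjointness relations; the extreme profile with three singletons can also be dispatched by hand, since the three possible pairings impose $b+c=2a$, $a+c=2b$, $a+b=2c$ simultaneously, which is impossible in $\Z_p$.

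Once a common difference $d$ is established, I dilate by $d^{-1}$ and translate so that the four classes become four arcs partitioning $\Z_p$; say in cyclic order they are intervals of lengths $\ell_1,\ell_2,\ell_3,\ell_4\ge 1$ starting at $0,\ \ell_1,\ \ell_1+\ell_2,\ \ell_1+\ell_2+\ell_3$. Taking the boundary elements $w=0$, $x=\ell_1+\ell_2-1$, $y=\ell_1+\ell_2$, and $z=p-1$, one from each arc, gives $w+x=\ell_1+\ell_2-1$ and $y+z=(\ell_1+\ell_2)+(p-1)\equiv \ell_1+\ell_2-1\pmod p$, so $w+x=y+z$ is a rainbow solution, contradicting rainbow freeness. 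This contradiction yields $\rb(\Z_p,S)\le 4$, and with the lower bound completes the proof.
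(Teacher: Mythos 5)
Your outer steps are sound and genuinely different from the paper's method: the lower bound, the reformulation of rainbow-freeness as the three disjointness conditions $S_{12}\cap S_{34}=S_{13}\cap S_{24}=S_{14}\cap S_{23}=\varnothing$, the Cauchy--Davenport sandwich $n_i+n_j-1\le|S_{ij}|\le n_i+n_j+1$ with complementary excesses summing to at most $2$, and the endgame (four arcs of a common difference force the rainbow solution $0+(\ell_1+\ell_2-1)\equiv(\ell_1+\ell_2)+(p-1)\pmod p$) are all correct. However, the step you yourself call the crux --- that every color class is an arithmetic progression and that all four progressions share one common difference --- is not proved; it is announced (``I will argue\dots'', ``I plan to organize this as a short case analysis\dots''), and as stated it does not follow from the tools you invoke. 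Vosper's theorem covers only the tight pairs $|S_{ij}|=n_i+n_j-1$ with both classes of size at least $2$ and $|S_{ij}|\le p-2$. For the slack cases $|S_{ij}|\in\{n_i+n_j,\,n_i+n_j+1\}$, which your counting permits, Freiman/Hamidoune--R{\o}dseth-type inverse theorems in $\Z_p$ do \emph{not} conclude that the summands are exact progressions, only that they sit inside slightly longer ones: for instance $A=\{0,1,3\}$, $B=\{0,1\}$ has $|A+B|=|A|+|B|$ yet $A$ is not a progression. Moreover those theorems carry hypotheses (summands of size at least $3$, $|A+B|\le p-4$, etc.) that fail exactly in regimes your partition can produce (singleton or size-$2$ classes, sumsets of size $p-1$). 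With classes that are only ``progressions with holes,'' your boundary-element computation collapses, because the elements $\ell_1+\ell_2-1$, $\ell_1+\ell_2$, and $p-1$ need not carry the colors you assign them. Closing this would require genuinely exploiting that the four classes partition $\Z_p$, which is a missing idea, not a routine case check; so the heart of the upper bound is absent.

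For contrast, the paper's proof is a self-contained coloring-pattern analysis rather than a sumset argument: it imports the fact that every $4$-coloring of $\Z_p$ admits a rainbow $3$-term arithmetic progression, normalizes by translation and dilation so that $R$ is dominant, $Y$ is $2$-dominant, and the pattern $YRB$ appears, and then proves string lemmas (no $BB$, $GG$, or $YY$; forced $R$-midpoints between $B$ and $G$; no $BRB$ or $GRG$; all $R$-strings of length $1$ or $3$) whose conjunction is contradictory. If you wish to rescue your additive-combinatorics route, you must either honestly close the inverse-theorem gap --- including the small-class, near-$p$, and hole-removal cases --- or adopt a structural argument of the paper's kind.
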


\begin{thm}
Let $n=p_1\cdots p_k$ be a prime factorization such that $p_i\leq p_j$ whenever $i<j$. 
Let $m$ be the smallest index such that $p_m\geq 3$ (or $m=k$ if this index does not exist),  $f_1=|\{p_i: p_i\leq 3, i\neq m\}|$, and  $f_2=|\{p_i:p_i\geq 5, i\neq m\}|$. 
Then 
\[\rb(\Z_{p_m},S)+f_1+2f_2
= \rb(\Z_n,S).\]
\end{thm}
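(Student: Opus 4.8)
The plan is to induct on the number $k$ of prime factors (counted with multiplicity), peeling off one \emph{non-base} prime $p$ at a time and establishing the recursive identity
\[
\rb(\Z_n,S) = \rb(\Z_{n/p},S) + g(p), \qquad g(p) = \begin{cases} 1 & p \in \{2,3\},\\ 2 & p \ge 5.\end{cases}
\]
Removing a non-base prime leaves the base $p_m$ unchanged and decreases $f_1$ by $1$ (when $p\le 3$) or $f_2$ by $1$ (when $p\ge 5$), so iterating this identity down to $n=p_m$ — where the base case is $\rb(\Z_{p_m},S)=4$ by Theorem~\ref{main1} if $p_m\ge 3$, and $\rb(\Z_2,S)=3$ by convention — reproduces exactly $\rb(\Z_{p_m},S)+f_1+2f_2$. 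Throughout I work with the subgroup $K=\langle p\rangle$ of order $n/p$, which is isomorphic to $\Z_{n/p}$, and with the quotient homomorphism $q\colon \Z_n\to\Z_p$, $x\mapsto x\bmod p$, whose kernel is $K$. Since $q$ is a homomorphism it carries every solution of $S$ to a solution of $S$ in $\Z_p$; this is the lever in both directions.

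For the lower bound I extend a maximum rainbow-$S$-free coloring of $\Z_{n/p}$, placed on $K$, to all of $\Z_n$ by coloring the nonzero cosets of $K$ with $g(p)$ fresh colors: one new color on $\Z_n\setminus K$ when $p\in\{2,3\}$, and two new colors when $p\ge 5$, assigned so that residues $s$ and $-s$ always receive the same new color (possible since $p\ge 5$ yields at least two orbits of $s\mapsto -s$). To see the coloring is rainbow-$S$-free, take a would-be rainbow solution. As only $g(p)$ colors lie off $K$, at least $4-g(p)\ge 2$ of the four elements lie in $K$, and not all four can, since a rainbow solution inside $K$ would contradict rainbow-freeness of the coloring of $\Z_{n/p}$. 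Applying $q$, the residue multiset is $\{0,0,0,t\}$ with $t\ne 0$, or $\{0,0,s,t\}$ with $s,t\ne 0$ and (because the two off-$K$ elements carry distinct new colors, hence lie in distinct orbits) $s\ne t$, $s\ne -t$. In either case no partition of the residues into two pairs has equal sums in $\Z_p$, so $q$ cannot send this to a solution of $S$. Hence $\rb(\Z_n,S)\ge \rb(\Z_{n/p},S)+g(p)$.

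For the upper bound I must show a rainbow-$S$-free coloring uses at most $\rb(\Z_{n/p},S)-1+g(p)$ colors. Two local restrictions are available from the same coset computation: the restriction to each coset $K+i$ is a rainbow-$S$-free coloring of $\Z_{n/p}$ (within-coset solutions correspond to solutions in $\Z_{n/p}$), so each coset carries at most $\rb(\Z_{n/p},S)-1$ colors by the induction hypothesis; and each coset of $H=\langle n/p\rangle\cong\Z_p$ carries at most $3$ colors by Theorem~\ref{main1}. The strategy is to account for all but a bounded number of colors through the induction hypothesis on $K$ and then show the nonzero cosets contribute only $g(p)$ genuinely new colors: a third independent ``direction'' would, through $q$, produce a residue pattern $\{0,s,t,u\}$ (one element in $K$, three off it) admitting an equal-sum pairing in $\Z_p$, and the corresponding solution of $S$ could be realized rainbow using the freedom to choose representatives within color classes.

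The main obstacle is exactly this last step, and it is genuinely global rather than local: one cannot simply bound ``the number of colors appearing only off $K$'' by $g(p)$, since a coloring may place few colors on $K$ and more on its complement while keeping the total small. The argument must therefore analyze the structure of rainbow-$S$-free colorings of $\Z_n$ as a whole — a dominant color class together with a controlled number of auxiliary colors pinned down by the $\Z_p$-quotient — and convert any excess of colors into an explicit rainbow solution via additive combinatorics in $\Z_p$. The dichotomy $g(p)=1$ for $p\in\{2,3\}$ versus $g(p)=2$ for $p\ge 5$ originates here: the involution $s\mapsto -s$ on the nonzero residues has a single orbit when $p\in\{2,3\}$ but at least two when $p\ge 5$, so the number of extra colors that can coexist off $K$ without forcing a rainbow solution jumps precisely at $p=5$. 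Making this counting uniform over all factorizations — including prime powers, where $H\le K$ and the clean product structure is unavailable — is where the bulk of the work lies.
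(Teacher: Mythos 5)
Your lower bound half is correct and is essentially identical to the paper's (Lemma \ref{lbcon} and Proposition \ref{lb1}): an extremal rainbow Sidon free coloring is placed on the subgroup $\langle p\rangle$, and its complement receives one fresh color when $p\le 3$, or two fresh colors constant on the orbits of $s\mapsto -s$ when $p\ge 5$; your check that the residue patterns $\{0,0,0,t\}$ and $\{0,0,s,t\}$ admit no equal-sum pairing is the same computation the paper performs. Your recursive framing $\rb(\Z_n,S)=\rb(\Z_{n/p},S)+g(p)$ over non-base primes is also an accurate restatement of the theorem.

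The genuine gap is the upper bound, and you flag it yourself: you never establish that a rainbow Sidon free coloring of $\Z_n$ uses at most $\rb(\Z_{n/p},S)-1+g(p)$ colors; you describe a strategy and then concede that its key step --- converting a hypothetical excess color into a rainbow solution, uniformly over prime powers --- ``is where the bulk of the work lies.'' That missing step is the hard content of the theorem. Moreover, your stated explanation of the dichotomy (one orbit of $s\mapsto -s$ when $p\le 3$, at least two when $p\ge 5$) only justifies the construction, not the upper bound; the paper's upper bound runs through the opposite decomposition and different ideas. It partitions $\Z_n$ into the $t=n/p$ cosets of the \emph{small} subgroup $\langle t\rangle\cong\Z_p$, proves that some coset $R_j$ dominates in the sense $|c(R_i)\setminus c(R_j)|\le 1$ for all $i$ (Lemma \ref{maximumcoset}), collapses the coloring to a quotient coloring $\hat c$ of $\Z_t$ that is again rainbow Sidon free, and counts colors to get $\rb(\Z_n,S)\le \rb(\Z_p,S)+\rb(\Z_t,S)-2$ (Lemma \ref{ublemma}). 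Even that bound fails exactly where your sketch is weakest: for $p=3$ it yields $+2$ per factor, while the theorem needs $+1$ for every factor of $3$ beyond the first. The paper closes this case ($9\mid n$, Lemma \ref{ublemma2}) with an ingredient of which your proposal has no trace: rainbow numbers for the Schur equation $x_1+x_2=x_3$. A color count forces $|c(R_0)|=3$ and forces $\hat c$ to use at least $\rb(\Z_t,1)$ colors (Observation \ref{schurobs}), so $\hat c$ admits a rainbow Schur solution $i+j=k$ in the quotient; Lemma \ref{monolem} (any coset carrying a color outside $c(R_0)$ is monochromatic) then lifts $\{i,j,k\}$ to a rainbow Sidon solution $\{s_it+i,\,s_jt+j,\,s_kt+k,\,s_0t\}$ in $\Z_n$. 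Without this step, or a genuine replacement for it, your induction cannot get past $n$ divisible by $9$.
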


\section{Rainbow Numbers for $\Z_p$}\label{Zp}

The section is structured as follows: First, we prove lemmas that let us make some assumptions about the structure of a rainbow Sidon free coloring of $\Z_p$.
In particular, Lemma \ref{dom} shows that $i$-dominant colors must exist for all $i\in \Z^*_p$ and Lemma \ref{transandscale} lets us translate and scale colorings, where $\Z^*_p$ is the cyclic multiplicative group. 
The remainder of the lemmas prove various structural results about rainbow Sidon free $4$-coloring on $\Z_p$. 
In fact, not all of the structural requirements of a rainbow Sidon free $4$-coloring on $\Z_p$ can be satisfied. 
In this sense, the entire section is to be read as a proof by contradiction, in which the contradiction is found in the proof of Theorem \ref{main1}. 

As an interesting note, we would like to point out that Lemmas \ref{nodouble} and \ref{2dom} have analogous results in the context of the interval $[n]$ as shown in \cite{FMR} by Fox et. al. 
In these cases, our proofs are similar, but expedited by the assumptions Lemmas \ref{dom} and \ref{transandscale} afford us.
Curiously,  Lemma \ref{nodoubley} contrasts with Lemma 3 in \cite{FMR}, where it is shown that a $YY$-string must exist.

As noted in the introduction, $\rb(\Z_2,S)=3$ and $\rb(\Z_3,S)=4$ by convention. 
The following observation is a trivial lower bound for $\rb(\Z_p,S)$ when $p\geq 5$.

\begin{obs}\label{trivial}
Let $p\geq 3$ be a prime. Then $\rb(\Z_p,S)>3$. 
\end{obs}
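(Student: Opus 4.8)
The plan is to exhibit a single exact $3$-coloring of $\Z_p$ that admits no rainbow solution to $S$. Since $\rb(\Z_p,S)$ is by definition the least $r$ for which \emph{every} exact $r$-coloring is forced to contain a rainbow solution, producing even one rainbow-$S$-free exact $3$-coloring is enough to certify that $r=3$ fails, hence $\rb(\Z_p,S)>3$.

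The key point is a counting observation. The Sidon equation $x_1+x_2=x_3+x_4$ has four variables, and a rainbow solution requires the four participating elements to receive four pairwise distinct colors. In particular, ``rainbow'' forces $x_1,x_2,x_3,x_4$ to be four \emph{distinct} elements: if two of them coincided they would share a color, violating the rainbow condition, so degenerate solutions are automatically excluded. Consequently a rainbow solution necessarily uses at least four colors, and therefore \emph{no} exact $3$-coloring of $\Z_p$ can contain a rainbow solution to $S$, regardless of which coloring we choose.

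It then remains only to note that an exact $3$-coloring of $\Z_p$ actually exists, which holds because $p\geq 3$ gives $|\Z_p|=p\geq 3$, so a surjection $c\colon\Z_p\to[3]$ is available. Fixing any such $c$ yields a rainbow-$S$-free exact $3$-coloring, showing that $3$ colors do not force a rainbow solution, and hence $\rb(\Z_p,S)\geq 4>3$.

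I do not expect any genuine obstacle here; the statement is labelled an observation precisely because it reduces to the pigeonhole-style fact that a four-variable equation cannot have a rainbow solution under fewer than four colors. The only subtlety worth stating explicitly is that the rainbow requirement forces the four variables to take distinct values, so that solutions with a repeated element (and hence a repeated color) never qualify and play no role in the argument.
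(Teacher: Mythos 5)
Your proof is correct and matches the paper's (implicit) reasoning: the paper states this as a trivial observation precisely because a rainbow solution to the four-variable equation $S$ requires four distinct colors, so no exact $3$-coloring of $\Z_p$ (which exists since $p\geq 3$) can contain one. Your explicit note that repeated elements force repeated colors, hence degenerate solutions never qualify, is exactly the right justification and introduces no gap.
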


To prove the corresponding upper bound, we show that any $4$-coloring of $\Z_p$ with $p\geq 5$ prime admits a rainbow solution to the Sidon equation.
Suppose $c:\mathbb{Z}_p\to \{R, Y, G, B\}$ is a rainbow Sidon free $4$-coloring. We say that a color $X$ is \emph{dominant} if for any pair of elements $x,x+1\in \Z_p$, either $c(x)=c(x+1)$ or $X\in \{c(x),c(x+1)\}$. More generally, we say a color $X$ is \emph{$i$-dominant} if for any pair of elements $x,x+i\in \Z_p$, either $c(x)=c(x+i)$ or $X\in \{c(x),c(x+i)\}$. 

For a color $X$, an interval of integers $[i,i+j]$ is an \emph{$X$-string} if $c([i,i+j])=\{X\}$. 
Similarly, given two colors $X_1,X_2$, an interval $[i,i+j]$ is an \emph{$X_1X_2$-string} if $c([i,i+j])= \{X_1,X_2\}$ (these strings are also called bichromatic).
An $X$ or $X_1X_2$-string $[i,i+j]$ is \emph{maximal} if $c(i-1),c(i+j+1)\notin \{X\}$ or $c(i-1),c(i+j+1)\notin \{X_1,X_2\}$, respectively. 
A pattern (sequence) of colors $X_0X_1X_2\cdots X_k$ \emph{appears at position $j$} if $c(j+i)= X_i$ for $0\leq i\leq k$; if such a $j$ does not exist, then $X_0X_1X_2\cdots X_k$\emph{ does not appear}.
A string $A$ is \emph{$i$-periodic} if for all $x,x+i\in A$ we have $c(x)=c(x+i)$. 
Often we will abuse notation and identify a string $A$ with its induced pattern of colors.

\begin{lem}\label{dom}
Every rainbow Sidon free $4$-coloring $c$ of $\Z_p$ has an $i$-dominant color for any $i\in \Z_p\setminus\{0\}$.
\end{lem}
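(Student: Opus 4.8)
The plan is to fix a nonzero $i\in\Z_p$ and analyze the pairs $\{x,x+i\}$ along the single cycle they trace out. Since $p$ is prime, $i$ generates $\Z_p$ additively, so the $p$ pairs $(x,x+i)$ form one Hamiltonian cycle through all of $\Z_p$. I will call a pair $(x,x+i)$ with $c(x)\neq c(x+i)$ an \emph{$i$-bichromatic edge}, and associate to it the two-element set of colors $\{c(x),c(x+i)\}$. Because the coloring is exact it is non-constant on this cycle, so bichromatic edges exist (in fact at least two, one at each boundary of a maximal monochromatic arc). The point is that an $i$-dominant color is precisely a color lying in \emph{every} bichromatic color-pair, so the goal becomes showing that the family of bichromatic color-pairs has a common color.

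The first key step is to show that any two bichromatic edges have overlapping color-pairs. Given bichromatic edges $(a,a+i)$ and $(b,b+i)$, I will use the identity $a+(b+i)=(a+i)+b$, which is a solution of the Sidon equation $S$ in the four elements $a$, $b+i$, $a+i$, $b$. If the color-pairs $\{c(a),c(a+i)\}$ and $\{c(b),c(b+i)\}$ were disjoint, these four elements would carry four distinct colors, producing a rainbow solution and contradicting that $c$ is rainbow $S$ free. Hence the two color-pairs must intersect, and the family of bichromatic color-pairs is an intersecting family of $2$-subsets of the four colors.

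Next I invoke the elementary classification of intersecting families of $2$-sets: such a family is either a \emph{star} (all members share a common color) or exactly a \emph{triangle} (three pairs on three colors, pairwise meeting but with no common color). In the star case the shared color is $i$-dominant and we are done, so it remains to rule out the triangle. If the family were a triangle on three of the colors, then the fourth color, say $B$, would appear on no $i$-bichromatic edge; consequently every element colored $B$ would have both of its $i$-neighbors also colored $B$. Since the $i$-cycle is connected and spans all of $\Z_p$, this forces the $B$-class to be empty or all of $\Z_p$, either of which contradicts exactness of the $4$-coloring.

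The main obstacle is exactly this last step: pairwise intersection alone does not guarantee a common color, since the triangle is a genuine obstruction. The crux is therefore to exploit the global structure — the connectivity of the $i$-cycle together with surjectivity of $c$ — to eliminate the triangle. Once the triangle is excluded, the classification delivers the common ($i$-dominant) color immediately, and since $i\neq 0$ was arbitrary the lemma follows.
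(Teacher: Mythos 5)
Your proof is correct and takes essentially the same approach as the paper: your intersecting family of bichromatic color-pairs is exactly the paper's auxiliary graph $H$ on the four colors, your pairwise-intersection step (via $a+(b+i)=(a+i)+b$) is the paper's $2K_2$-freeness, and your exclusion of the triangle via connectivity of the $i$-cycle plus exactness is the paper's observation that $\delta(H)\geq 1$ (since $i$ generates $\Z_p$) forces $H\cong K_{1,3}$, whose center is the $i$-dominant color.
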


\begin{proof}
Let $i\in \Z_p\setminus\{0\}$. 
Form a graph $H$ on $V(H)=\{R,Y,G,B\}$ where $X_1X_2\in E(H)$ if and only if there exists $x\in \Z_p$ such that $\{c(x),c(x+i)\}=\{X_1,X_2\}$. 
Notice that $\delta(H)\geq 1$, since $\Z_p$ does not have any proper subgroups generated by $i$. 
By construction, if $H$ contains a $2K_2$ subgraph, then $c$ admits a rainbow solution to the Sidon equation. 
Therefore, $H$ is $2K_2$-free. Since $\delta(H)\geq 1$ and $H$ is $2K_2$-free, $H$ must be isomorphic to $K_{1,3}$. 
Let $X\in V(H)$ such that $d(X)=3$. 
Notice that $X$ is an  $i$-dominant color.
\end{proof}

The next lemma shows that the rainbow Sidon free property of colorings is preserved by translating and scaling colorings. 

\begin{lem}\label{transandscale}
Let $c$ be a coloring of $\Z_p$, $i\in \Z_p^*$, and $j\in \Z_p$. Let $c_{i,j}$ be given by $c_{i,j}(x)=c(ix+j)$. The coloring $c$ is rainbow Sidon free if and only if $c_{i,j}$ is rainbow Sidon free.
\end{lem}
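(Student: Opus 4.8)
The plan is to exploit the fact that the affine map $\phi(x) = ix + j$ is a bijection of $\Z_p$ (since $i \in \Z_p^*$ is invertible) and that it carries solutions of the Sidon equation to solutions. First I would record the behavior of a single Sidon quadruple under $\phi$: if $x_1 + x_2 = x_3 + x_4$ and I set $y_k = \phi(x_k) = i x_k + j$, then
\[
y_1 + y_2 = i(x_1+x_2) + 2j = i(x_3+x_4) + 2j = y_3 + y_4,
\]
so $(y_1,y_2,y_3,y_4)$ is again a solution to $S$. The only place the structure of the equation enters is that $S$ is balanced: each side carries two variables with unit coefficients, so the additive constant $2j$ and the common factor $i$ cancel identically. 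This is essentially a one-line computation, and I do not expect a genuine obstacle here; the point to flag is simply that the balance of the equation is exactly what makes scaling and translating admissible.

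Next I would observe that the rainbow property transfers along $\phi$. By definition $c_{i,j}(x_k) = c(\phi(x_k)) = c(y_k)$, so the four colors $c_{i,j}(x_1),\dots,c_{i,j}(x_4)$ are pairwise distinct if and only if the four colors $c(y_1),\dots,c(y_4)$ are. In particular, a rainbow solution of $S$ under $c_{i,j}$ yields a rainbow solution under $c$ by applying $\phi$ coordinatewise.

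Finally, to obtain the biconditional cleanly I would use that $\phi$ is invertible with affine inverse $\phi^{-1}(y) = i^{-1}(y-j)$, which has the same form and hence also preserves Sidon solutions by the identical computation. Thus $\phi$ induces a bijection between rainbow Sidon solutions under $c_{i,j}$ and rainbow Sidon solutions under $c$, so $c$ admits no rainbow Sidon solution precisely when $c_{i,j}$ admits none; that is, $c$ is rainbow Sidon free if and only if $c_{i,j}$ is. The entire argument is a verification, and its only subtle feature is ensuring the correspondence runs in both directions, which is immediate from the invertibility of $\phi$.
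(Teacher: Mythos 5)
Your proof is correct and follows essentially the same route as the paper: a direct verification that the affine bijection $x \mapsto ix + j$ preserves both the Sidon equation (since it is balanced, the factor $i$ and the constant $2j$ cancel) and the distinctness of colors. If anything, your treatment is slightly more careful than the paper's about which direction the color correspondence runs and why invertibility gives the biconditional, but the underlying idea is identical.
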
 
\begin{proof}
Let $A=\{x_1,x_2,x_3,x_4\}\subset \Z_p$. 
Notice that $A$ is rainbow under $c$ if and only if  $A_{i,j}=\{ix_1+j,ix_2+j,ix_3+j,ix_4+j\}$ is rainbow under $c_{i,j}$. 
Furthermore, $x_1+x_2=x_3+x_4$ if and only if $ix_1+j+ix_2+j=ix_3+j+ix_4+j$.
\end{proof}

It should be noted that Proposition 3.5 in \cite{BEHHKKLMSWY} and Theorem 3.5 in \cite{JLMNR} together determine when $\rb(\Z_p,eq)=3$ and when $\rb(\Z_p,eq)=4$, where $eq$ is the equation $x_1+x_2=2x_3.$
Since we use Proposition 3.5 from \cite{BEHHKKLMSWY}, we have stated it below.

\begin{prop}[3.5 in \cite{BEHHKKLMSWY}]
Let $eq$ be the equation $x_1+x_2=2x_3$. For every prime number~$p$, $3\leq \rb(\Z_p,eq)\leq 4$. 
\end{prop}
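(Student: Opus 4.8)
The plan is to prove the two bounds separately. The lower bound $\rb(\Z_p,eq)\ge 3$ is immediate: a rainbow solution of $x_1+x_2=2x_3$ is a set of three \emph{distinctly} colored (hence distinct) elements, so no exact $2$-coloring can contain one. Thus every exact $2$-coloring is rainbow-free and $\rb(\Z_p,eq)>2$. For the upper bound $\rb(\Z_p,eq)\le 4$ it suffices to show that every exact $4$-coloring of $\Z_p$ admits a rainbow $3$-term arithmetic progression; since an exact $4$-coloring requires at least four elements, only primes $p\ge 5$ need to be treated (the cases $p=2,3$ being vacuous or covered by convention). So I would fix $p\ge 5$ and a candidate rainbow-free exact $4$-coloring $c$ and aim for a contradiction.

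The first step is a reformulation. A set $\{a,a+d,a+2d\}$ with $d\neq 0$ is an arithmetic progression, so $c$ has a rainbow solution exactly when some nonzero difference $d$ and some $a$ make $c(a),c(a+d),c(a+2d)$ all distinct. Hence being rainbow-free means: for every nonzero $d$, reading the colors along the cycle $0,d,2d,\dots$ (which visits all of $\Z_p$ because $p$ is prime) produces no three consecutive distinct entries. As in Lemma~\ref{transandscale}, the relation $x_1+x_2=2x_3$ is preserved by every affine map $x\mapsto ix+j$ with $i\in\Z_p^*$, so rainbow-freeness is invariant under translation and scaling; in particular I may normalize any chosen differently-colored pair to sit at positions $0$ and $1$. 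The key local consequence of rainbow-freeness is a \emph{closure} property: if $c(a)\neq c(b)$, then the midpoint $(a+b)/2$ and the two reflections $2a-b,\ 2b-a$ must each be colored $c(a)$ or $c(b)$, since otherwise the corresponding progression would be rainbow.

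The heart of the argument is to push this closure to a global contradiction using primality. The crucial fact is that in $\Z_p$ the midpoint operation is extremely expansive: the closure of two distinct points under $\{a,b\}\mapsto (a+b)/2$ is all of $\Z_p$, because the dyadic combinations $k/2^{\,j}$ of two distinct points already exhaust $\Z_p$. I would therefore analyze the union $T=C_X\cup C_Y$ of two color classes, on which the closure above holds for every oppositely-colored pair, and argue that $T$ is forced to be either a genuine arithmetic progression or all of $\Z_p$ -- neither of which is compatible with four nonempty color classes. Concretely, I would first treat singleton classes: if a color occurs only at $v$, then rainbow-freeness forces $c(v-d)=c(v+d)$ and $c(v+d)=c(v+2d)$ for every $d\neq 0$, so $c$ is symmetric about $v$ and constant on the cosets of $\langle 2,-1\rangle$ in $\Z_p^*$; this strong rigidity, together with the progressions among the remaining colors, is shown to be incompatible with four colors. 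I would then treat classes of size at least two, where two same-colored points together with a third color let the midpoint closure propagate along a full line $\{a+k\delta:k\in\Z_p\}=\Z_p$.

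The step I expect to be the main obstacle is exactly this propagation. The closure is guaranteed only across \emph{differently}-colored pairs, so a monochromatic adjacency in the cycle $\sigma_d$ halts the forcing and, a priori, could shelter a fourth color behind it -- indeed the purely local condition ``no three consecutive distinct colors'' is satisfiable with arbitrarily many colors, for instance by blocks of length two. The real content is therefore to show that \emph{no} placement of these breaks can be consistent across all differences $d$ simultaneously; this is where primality and the affine symmetry must be combined to convert the one-dimensional block structure into the expansive midpoint closure and thereby force a fourth color into some non-monochromatic progression, completing the contradiction.
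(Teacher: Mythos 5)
Your lower bound is fine and trivial: a rainbow solution requires three distinct colors, so no exact $2$-coloring admits one, giving $\rb(\Z_p,eq)\ge 3$. But the upper bound is the entire content of the proposition, and what you give there is a plan rather than a proof --- you say so yourself when you write that ``the real content is therefore to show that no placement of these breaks can be consistent across all differences $d$ simultaneously.'' That sentence is a restatement of the proposition, not a step toward it, and nothing in the proposal discharges it. (Note also that the paper itself contains no proof of this statement: it is quoted from \cite{BEHHKKLMSWY} precisely so it can be used as a black box, so the argument must stand entirely on its own.)

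Two concrete gaps. First, in the singleton-class case you correctly derive that $c(v-d)=c(v+d)$ and $c(v+d)=c(v+2d)$ for all $d\neq 0$, so that $c$ is constant on the sets $v+Hx$ where $H=\langle 2,-1\rangle\le \Z_p^*$; but you then assert this rigidity ``is shown to be incompatible with four colors'' without any argument. The rigidity alone is \emph{not} incompatible with four colors: $H$ can have index at least $3$ in $\Z_p^*$ (for example $p=31$, where $\langle 2,-1\rangle$ has order $10$ and index $3$), and coloring $v$ with one color and the three cosets of $H$ with three further colors is an exact $4$-coloring in which every progression through $v$ is non-rainbow. Ruling out such a coloring requires exhibiting a rainbow progression avoiding $v$, i.e.\ proving the theorem for this structured family --- work the proposal does not do. Second, for color classes of size at least two, the midpoint closure you invoke applies only to differently-colored pairs, and, as you yourself note, a monochromatic adjacency halts the forcing; the promised mechanism by which ``primality and the affine symmetry must be combined'' to overcome this is never described, let alone executed. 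Since both branches of the case analysis stop exactly where the real work begins, the proposal does not establish $\rb(\Z_p,eq)\le 4$.
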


Let $x,x+i,x+2i$ be an rainbow $3$-term arithmetic progression in $\Z_p$ under the coloring~$c$. 
Without loss of generality, let $c(x+i)=R$. Notice that $R$ is dominant under the coloring~$c_{i^{-1},0}$. 
Furthermore, $R$ is not $2$-dominant given $c_{i^{-1},0}$. 
Since $c$ and $c_{i^{-1},0}$ have the same behavior with respect to rainbow Sidon solutions, we will always assume that $R$ is dominant, and $R$ is not $2$-dominant. 
In particular, we will assume that the pattern $YRB$ appears in $c$ (otherwise, we can find a rainbow $3$-term arithmetic progression, and scale/translate the coloring to put ourselves in this position). 
Furthermore, since a $2$-dominant color must exist (and it is either $Y$ or $B$), we can assume that $Y$ is $2$-dominant. From this point forward in this section, we will use these assumptions to prove structural results about $c$. 
Ultimately, these structures will lead to a contradiction,in the sense that we will find a rainbow solution to the Sidon equation in the proof of Theorem \ref{main1}.

\begin{lem}\label{nodouble}
Let $X\in\{B,G\}$. Every maximal $RX$-string  is $2$-periodic with exactly one element colored by $X$ within every substring of length $2$. In particular, $BB$ and $GG$ do not appear.
\end{lem}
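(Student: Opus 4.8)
The plan is to extract everything from the standing assumption that $Y$ is $2$-dominant, which is exactly the ingredient that shortens the argument relative to the interval version in \cite{FMR}. Fix $X\in\{B,G\}$ and let $A=[i,i+j]$ be a maximal $RX$-string, so that by definition $c(A)=\{R,X\}$ and in particular both $R$ and $X$ actually occur on $A$ (hence $A$ has at least two elements). My target is to show $A$ reads $RXRX\cdots$ or $XRXR\cdots$, and then to reduce the global non-appearance of $BB$ and $GG$ to this.

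First I would get $2$-periodicity of $A$ essentially for free. For any $x,x+2\in A$ the colors $c(x),c(x+2)$ both lie in $\{R,X\}$, and since $X\neq Y$ neither of them equals $Y$; the $2$-dominance of $Y$ then forces $c(x)=c(x+2)$. Consequently the elements of $A$ at even offset from $i$ all carry a single color $a$, those at odd offset all carry a single color $b$, with $a,b\in\{R,X\}$. Next I would upgrade this to strict alternation: because both $R$ and $X$ appear on $A$, we cannot have $a=b$ (that would make $A$ monochromatic), so $a\neq b$ and therefore $\{a,b\}=\{R,X\}$. This is precisely the claim that $A$ is $2$-periodic with exactly one element colored $X$ in every length-$2$ substring, and in particular neither $XX$ nor $RR$ occurs inside $A$.

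Finally, for the ``in particular'' clause I would argue that any occurrence of $XX$ must sit inside a genuine (two-colored) $RX$-string. Suppose $c(a)=c(a+1)=X$, and let $A$ be the maximal interval on which $c$ takes values in $\{R,X\}$ that contains $\{a,a+1\}$. Since $c$ uses all four colors, $A\neq\Z_p$, so $A$ has two boundary neighbors, each colored by something in $\{Y,G,B\}\setminus\{X\}$, i.e.\ by a non-$R$ color distinct from $X$. The $R$-dominance from Lemma \ref{dom} forbids two distinct non-$R$ colors from being adjacent, so the boundary element of $A$ next to each neighbor must itself be $R$; hence $R$ occurs on $A$ and $A$ is a bona fide maximal $RX$-string containing $XX$, contradicting the alternation established above. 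Taking $X=B$ and $X=G$ then yields that $BB$ and $GG$ do not appear.

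The only place any thought is required is this last reduction: turning a bare repetition $XX$ into a repetition inside a two-colored string, which is what lets the alternation result apply. That step is handled cleanly by $R$-dominance, and the preceding $2$-periodicity and alternation steps are immediate once $Y$ is $2$-dominant, so I do not expect a substantive obstacle beyond correctly invoking Lemma \ref{dom} at the boundary.
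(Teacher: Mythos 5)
Your proof is correct, but it takes a genuinely different route from the paper's. The paper proves this lemma by constructing explicit rainbow Sidon solutions: for a maximal $RG$-string it pairs any forbidden pattern $RRG$, $GRR$, $RGG$, $GGR$ at a position $x$ with the standing pattern $YRB$ at a position $j$ (the identity $x+(j+2)=(x+2)+j$ then yields a rainbow solution with colors $\{R,G\},B,\{G,R\},Y$), and for a maximal $RB$-string it first uses the $2$-dominance of $Y$ to derive that $GRY$ must appear somewhere, then runs the same pairing argument against $RRB$, $BRR$, $BBR$, $RBB$. You short-circuit all of this: since no element of an $RX$-string is colored $Y$, the $2$-dominance of $Y$ applied to pairs $x,x+2$ inside the string forces $2$-periodicity outright, and strict alternation follows because the definition of an $RX$-string requires both colors to occur. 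This is shorter, treats $X=B$ and $X=G$ uniformly (the paper needs an extra derivation for the $RB$ case), and invokes the Sidon hypothesis only through the standing dominance assumptions rather than through freshly constructed solutions. You also make explicit a step the paper leaves silent: why an occurrence of $BB$ or $GG$ must sit inside a bona fide maximal $RX$-string at all. Your boundary argument via $R$-dominance shows the endpoints of such a maximal interval are colored $R$, which both justifies the ``in particular'' clause and recovers the slightly stronger form $RXR\cdots RXR$ (endpoints colored $R$) that the paper asserts and later uses, e.g., when truncating maximal $BR$-strings in the proof of Lemma \ref{2dom}. What the paper's heavier approach buys is an early demonstration of the pattern-pairing mechanism that recurs throughout Lemmas \ref{nodoubley}--\ref{1or3}; for this particular lemma, however, your argument is complete and more economical.
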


\begin{proof}
Let $A$ be a maximal $RG$-string. 
Recall that we assume the pattern $YRB$ appears under the coloring $c$.
Therefore, if $A$ contains patterns $RRG$, $GRR$, $RGG$, or $GGR$, then $c$ admits a rainbow solution to the Sidon equation. 
Thus, $A$ must be of the form $RGR\cdots RGR$. 

Now let $A$ be a maximal $RB$-string. Since $Y$ is $2$-dominant, we know that patter $GRY$ must appear under $c$.
This implies that $A$ cannot contain $RRB, BRR, BBR, RBB$. 
Thus, $A$ must be of the form $RBR\cdots RBR$.
\end{proof}

The next lemma extends Lemma \ref{nodouble} to shows that the pattern $YY$ cannot appear under the coloring $c$. It will be very useful since it implies that if $c(x)=Y$, then $c(x-1)=c(x+1)=R$. 

\begin{lem}\label{nodoubley}
The pattern $YY$ does not appear under the coloring $c$.
\end{lem}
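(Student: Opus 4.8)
The plan is to assume that $YY$ appears and to manufacture a rainbow solution, contradicting that $c$ is rainbow Sidon free. The organizing principle is a reformulation already implicit in Lemma \ref{dom}: a solution $x_1+x_2=x_3+x_4$ is the same data as two pairs $\{x_3,x_1\}$ and $\{x_2,x_4\}$ sharing a common difference $d=x_1-x_3=x_4-x_2$, so a \emph{rainbow} solution is exactly a pair of disjoint edges (a $2K_2$) using all four colors in the graph $H_d$ of Lemma \ref{dom}. Since $c$ is rainbow Sidon free, each $H_d$ (for $d\neq 0$) is the star $K_{1,3}$ centered at the $d$-dominant color, and in particular contains no triangle: if three colors were pairwise adjacent in some $H_d$, then the fourth color---which has degree at least $1$ because $d$ generates $\Z_p$---would be joined to one of them, and that edge together with the opposite triangle edge would be a forbidden $2K_2$. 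Thus it suffices to exhibit a single difference $d$ at which three colors occur pairwise.

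Normalize, via Lemma \ref{transandscale} and $R$-dominance, so that a maximal $Y$-string begins at $0$; then $c(0)=c(1)=Y$ and $c(-1)=R$ (the neighbor of the string is distinct from $Y$ and, being in a bichromatic adjacent pair, must be the dominant color $R$, while $BB,GG$ are excluded by Lemma \ref{nodouble}). Recall also that every occurrence of $G$ or $B$ is isolated and flanked by $R$'s, and that since the dominant color at difference $2$ is $Y$ we have $c(x+2)\in\{c(x),Y\}$ for every $x$.

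The main engine is a triangle on $\{R,Y,B\}$ (or $\{R,Y,G\}$). Take any $b$ with $c(b)=B$, so $c(b\pm1)=R$ and $c(b+2)\in\{B,Y\}$. If $c(b+2)=B$, then at difference $d=b+1$ the pairs $(0,b+1)$, $(-1,b)$, $(1,b+2)$ are colored $\{Y,R\}$, $\{R,B\}$, $\{Y,B\}$, a triangle on $\{R,Y,B\}$, and we are done; the symmetric choice handles $c(b-2)=B$, and the analogous computation with a $G$ handles $c(g\pm2)=G$. It therefore remains to treat the rigid case in which every $B$ sits in a block $YRBRY$ and every $G$ in a block $YRGRY$. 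Here I would fix any $G$ at $g$ and $B$ at $b$; the edge $GB$ already lives in $H_{d_0}$ for $d_0=g-b$, so it is enough to produce an $\{R,Y\}$ edge at the same difference $d_0$ (an $R$-position and a $Y$-position differing by $d_0$), since $\{GB\}$ and $\{RY\}$ then form a $2K_2$. Equivalently one wants $(R+B)\cap(Y+G)\neq\varnothing$, or one of its two siblings obtained by repartitioning the colors.

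I expect this last step to be the main obstacle: the star structure of each $H_d$ is precisely tuned so that every \emph{local} pair one writes down near the $YY$ block shares the color $Y$ or $R$, so no short pattern yields a $2K_2$ by itself. The contradiction must instead come globally, by pinning an $\{R,Y\}$ edge to the prescribed difference $d_0=g-b$; the two guaranteed consecutive values $0,1\in Y$ together with the abundance of $R$ forced by all the $G$'s and $B$'s should make such a coincidence unavoidable, via a Cauchy--Davenport count on the color classes. This is exactly the place where the cyclic/field structure of $\Z_p$ enters, and where the argument departs from the interval $[n]$ of \cite{FMR}, in which the analogous $YY$-string is in fact forced to exist.
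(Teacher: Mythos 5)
Your reduction is correct as far as it goes: the reformulation of a rainbow Sidon solution as a $2K_2$ (equivalently, a triangle plus the guaranteed fourth edge) in the difference graph $H_d$, the normalization $c(0)=c(1)=Y$, $c(-1)=R$, and the triangle you build at difference $b+1$ when $c(b+2)=B$ (together with its mirror and its $G$-variants) are all sound. But the proof stops exactly where the lemma actually lives. In the remaining ``rigid'' case --- every $B$ in a block $YRBRY$, every $G$ in a block $YRGRY$ --- you need an $\{R,Y\}$ edge at the prescribed difference $d_0=g-b$, and you offer only the hope that ``a Cauchy--Davenport count on the color classes'' makes this unavoidable. That tool cannot close the gap: to force two sumsets of color classes to meet in $\Z_p$ you would need $|\mathcal{X}_1+\mathcal{X}_2|+|\mathcal{X}_3+\mathcal{X}_4|>p$, while Cauchy--Davenport only gives $(|\mathcal{X}_1|+|\mathcal{X}_2|-1)+(|\mathcal{X}_3|+|\mathcal{X}_4|-1)=p-2$ for any pairing of the four classes, since they partition $\Z_p$. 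No cardinality count alone can work: the classes of $B$ and $G$ may be singletons, and already in $[n]$ the result of \cite{FMR} shows that counting arguments need density hypotheses on the color classes. Worse, rainbow-freeness says $H_{d_0}$ is a star containing the edge $GB$, hence centered at $G$ or $B$, so the \emph{absence} of an $\{R,Y\}$ edge at difference $d_0$ is precisely what you have assumed; any contradiction must come from a genuinely structural, global argument, and that argument is missing.

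The paper closes this gap with a propagation argument that needs no case split at all. Fix $i$ with $c(i)=G$ and $i+m$ with $c(i+m)=B$, and suppose $RYY$ appears at position $j$ (such a $j$ exists as soon as $YY$ appears, since the element preceding a maximal $Y$-string must be $R$ by dominance). Short case analyses with explicit quadruples (e.g.\ $\{i,j+m,i+m,j\}$, $\{i,j+m+1,i+m,j+1\}$, $\{i,j+m+2,i+m,j+2\}$) force $c(j+m)=R$, $c(j+m+1)=Y$, $c(j+m+2)=Y$; that is, $RYY$ appears again at $j+m$. Iterating, $RYY$ appears at $j+km$ for every $k$, and since $m\neq 0$ generates $\Z_p$, this gives $c(x)=R$ for all $x\in\Z_p$, contradicting that $c$ is an exact $4$-coloring. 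This propagation is exactly the global mechanism your rigid case requires, and in fact it subsumes both of your cases; without it (or something equivalent), your argument does not prove the lemma.
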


\begin{proof}
Let  $i$ and $i+m$ be colored $G$ and $B$ by $c$. 
Suppose the pattern $RYY$ exists at position $j$, and consider colors of  $j+m, j+m+1,j+m+2$. 
There are three cases: $c(j+m)$ in $\{Y\}$, $\{B,G\}$, or $\{R\}$. 
If $c(j+m)=Y$, then $\{i, j+m, i+m, j\}$ is a rainbow Sidon solution. 
If $c(j+m)\in\{B, G\}$, then  $c(j+m+1)=R$ by Lemma \ref{nodouble} and  $\{i,j+m+1,i+m,j+1\}$ is a rainbow Sidon solution.
Therefore, $c(j+m)=R$. 

Next consider the color of $j+m+2$. 
There are three case: $c(j+m+2)$ is in $\{R\}$, $\{B,G\}$, or $\{Y\}$. 
If $c(j+m+2)=R$, then $\{i,j+m+2,i+m,j+2\}$ is a rainbow Sidon solution.
If $c(j+m+2)\in\{B, G\}$, then $c(j+m+1)=R$ by Lemma \ref{nodouble} and $\{i,j+m+1,i+m,j+1\}$ a is rainbow Sidon solution. 
Therefore, $c(j+m+2)=Y$. 

It follows that $c(j+m+1)\not\in\{ B,G\}$ since $c(j+m+2)=Y$ and $R$ is dominant. 
Furthermore, $c(j+m+1)\neq R$, else $\{i,j+m+1,i+m,j+1\}$ is a rainbow Sidon solution. 
Therefore, $c(j+m+1)=Y$.

Thus if there is an $RYY$-string starting at $j$, then there is an $RYY$ at $j+m$. 
Since  $m$ is a generator of $\Z_p$, Case 3.3 implies that $c(x)=R$ for all $x\in \Z_p$, which is a contradiction. 
\end{proof}

The following lemma proves that any pair of elements with colors $B$ and $G$ must have an $R$-string in the middle between them.

\begin{lem}\label{midpoint}
Suppose that $X_1RY$ appears at $i$ and $YRX_2$ appears at $i+j-2$ where $j$ is odd and $\{X_1,X_2\}=\{B,G\}$. Then $c(i+\frac{j-1}{2})=c(i+\frac{j+1}{2})=R.$
\end{lem}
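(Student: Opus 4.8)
The plan is to normalize, extract the rigid local structure forced by the earlier lemmas, and then produce an explicit rainbow Sidon solution in every case where the conclusion fails. First I would translate the coloring using Lemma~\ref{transandscale} with scaling factor $1$ and shift $i$, so that without loss of generality $i=0$; thus $c(0)=X_1$, $c(1)=R$, $c(2)=Y$, $c(j-2)=Y$, $c(j-1)=R$, $c(j)=X_2$, with $\{X_1,X_2\}=\{B,G\}$ and $j$ odd. Write $a=\frac{j-1}{2}$ and $b=\frac{j+1}{2}$, so that $b=a+1$ and, crucially, $0+j=a+b$; the goal becomes $c(a)=c(b)=R$. (I may assume $j\ge 7$, since for smaller odd $j$ the two prescribed patterns overlap or abut the center and the hypotheses degenerate.) The key structural input is that no two consecutive elements are both non-$R$: by $R$-dominance two equal-colored neighbors would have to be monochromatic, while Lemmas~\ref{nodouble} and~\ref{nodoubley} forbid $BB$, $GG$, and $YY$. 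Hence every non-$R$ element is flanked by $R$ on both sides, and in particular at most one of $c(a),c(b)$ is non-$R$.

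Next I would exploit the Sidon quadruple $\{0,j,a,b\}$, whose endpoints already carry the two distinct colors $B$ and $G$. If $c(a)=Y$, then isolation gives $c(b)=R$ and the four colors of $\{0,j,a,b\}$ are exactly $\{B,G,Y,R\}$, a rainbow solution; the same argument rules out $c(b)=Y$. Combined with the previous paragraph, this leaves only two possibilities: either $c(a)=c(b)=R$ (the desired conclusion), or exactly one of them lies in $\{B,G\}$ and the other equals $R$. I assume the latter toward a contradiction. Using the reflection $x\mapsto j-x$, which preserves the Sidon equation, rainbow-freeness, the isolation property above, and the $2$-dominance of $Y$, while swapping the two prescribed patterns and interchanging $a\leftrightarrow b$, I may assume $c(a)=X_3\in\{B,G\}$ and $c(b)=R$ (so also $c(a-1)=R$ by isolation). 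Let $\bar X_3$ denote the other color of $\{B,G\}$; since $\{c(0),c(j)\}=\{B,G\}$, exactly one endpoint is colored $\bar X_3$.

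I then split on which endpoint carries $\bar X_3$. If $c(0)=\bar X_3$, the quadruple $\{0,a-1,a,j-2\}$ satisfies $a+(a-1)=0+(j-2)$ and is colored $\bar X_3,R,X_3,Y$, a rainbow solution. If instead $c(j)=\bar X_3$, I first argue that $c(a+3)=R$: by $2$-dominance of $Y$ applied to the pair $(a+1,a+3)$ with $c(a+1)=R$ we get $c(a+3)\in\{R,Y\}$, and if $c(a+3)=Y$ then isolation forces $c(a+2)=R$, contradicting the fact that $2$-dominance applied to $(a,a+2)$ with $c(a)=X_3$ forces $c(a+2)\in\{X_3,Y\}$. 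With $c(a+3)=R$ in hand, the quadruple $\{2,a,a+3,j\}$ satisfies $a+(a+3)=2+j$ and is colored $Y,X_3,R,\bar X_3$, again rainbow. Either branch contradicts rainbow-freeness, so $c(a)=c(b)=R$.

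I expect the second branch to be the crux: when the nearer endpoint repeats the color $X_3$, none of the ``obvious'' $R$-positions $1,j-1,a-1,a+1$ completes a rainbow quadruple together with $a$ and $j$, and one must instead manufacture a fresh $R$ at distance $3$ from the center by playing the isolation of non-$R$ elements against the $2$-dominance of $Y$. The only routine bookkeeping is checking that the four elements of each exhibited quadruple are distinct, which holds for $j\ge 7$; the small odd $j$ are covered by the degenerate-overlap remark and the already-settled base cases.
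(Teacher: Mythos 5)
Your proof is correct and takes essentially the same approach as the paper's: case analysis on the colors of the midpoint pair, reduction by the reflection symmetry $x\mapsto j-x$, ruling out $Y$ via the quadruple $\{0,j,a,b\}$, and exhibiting explicit rainbow quadruples in the remaining $\{B,G\}$ cases (your Case A quadruple $\{0,a-1,a,j-2\}$ is exactly the paper's). The only divergence is the subcase $c(a)=c(0)$, where the paper splits on $c(a+2)\in\{X_1,Y\}$ (forced by $2$-dominance of $Y$) and finishes with a quadruple through $a+2$, whereas you first force $c(a+3)=R$ and use $\{2,a,a+3,j\}$ --- slightly longer, but equally valid.
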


\begin{proof}
Let $y=i+\frac{j-1}{2}$. 
For the sake of contradiction, suppose that $\{c(y),c(y+1)\}\neq\{R\}$. 
By the symmetry of the problem, there are three cases: $Y\in \{c(y),c(y+1)\}$, $c(y)=X_1$, and $c(y)=X_2$. 

If $c(y)=Y$, then
by the dominance of $R$ and Lemma \ref{nodoubley} $c(y+1)=R$, hence  $\{i,i+j,y,y+1\}$ is a rainbow Sidon solution.
If $c(y)=X_1$, then $c(y+1)=R$ and $c(y+2)\in \{X_1,Y\}$. 
Depending on the value of $c(y+2)$, either $\{i+1,i+j,y,y+2\}$ or $\{i+2,i+j,y+1,y+2\}$ is a rainbow Sidon solution.
If $c(y)=X_2$, then  $c(y-1)=R$ and $\{i,i+j-2,y,y-1\}$ is a rainbow  Sidon solution.

In each case, we get a contradiction. 
\end{proof}

\begin{lem}\label{2dom}
The patterns $BRB$ and $GRG$ do not appear under the coloring $c$.
\end{lem}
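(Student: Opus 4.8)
The plan is to argue by contradiction, consistently with the rest of the section: assuming $c$ is rainbow Sidon free, I suppose that $BRB$ appears and then exhibit a rainbow solution to $S$, which is the contradiction that proves $BRB$ cannot occur. The pattern $GRG$ is then handled by the symmetric argument, exchanging the roles of $B$ and $G$; this exchange is legitimate because the only asymmetry in our standing assumptions, the appearance of $YRB$, is mirrored by $YRG$ and $GRY$, which are produced automatically from $R$ being dominant and $Y$ being $2$-dominant (every $G$ is flanked by $R$'s, and its $RG$-string is $Y$-bounded, exactly as below).

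First I would pin down the local structure forced by a copy of $BRB$ at a position $j$, so $c(j)=c(j+2)=B$ and $c(j+1)=R$. Since $R$ is dominant and $BB$ does not appear (Lemma \ref{nodouble}), the pairs $(j-1,j)$ and $(j+2,j+3)$ force $c(j-1)=c(j+3)=R$; hence $BRB$ lies strictly inside a maximal $RB$-string $[a,b]$, which by Lemma \ref{nodouble} has the form $RBR\cdots BR$ and, since it contains $BRB$, carries at least two $B$'s. The exterior entries satisfy $c(a-1),c(b+1)\in\{Y,G\}$. Here the $2$-dominance of $Y$ is decisive: the pair $(a-1,a+1)$ has $c(a+1)=B$ and $c(a-1)\ne B$, so the two entries are unequal and $2$-dominance forces $Y\in\{c(a-1),B\}$, i.e.\ $c(a-1)=Y$; symmetrically $c(b+1)=Y$. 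Thus around $BRB$ we have an explicit block $Y\,R\,(BR)^{s}\,R\,Y$ with $s\ge 2$ interior $B$'s.

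Next I would locate a $G$. Since $c$ is exact, $G$ occurs; by dominance and Lemma \ref{nodouble} each $G$ is flanked by $R$'s and sits in a maximal $RG$-string which, by the identical $2$-dominance argument, is again bounded by $Y$'s. With all four colors now available in an explicit layout, I build a rainbow solution of $x_1+x_2=x_3+x_4$ from the $Y$ and a $B$ of the $RB$-block together with a $G$ at some position $g$: fixing $Y$ at an endpoint $y\in\{a-1,b+1\}$ and a $B$ at an interior position $\beta$, the reflected position $r=y+\beta-g$ yields the relation $y+\beta=r+g$ with colors $\{Y,B\}$ on one side and $\{R,G\}$ on the other, hence a rainbow solution as soon as $c(r)=R$ and the four elements are distinct. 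Crucially I have a large supply of admissible pairs $(y,\beta)$ (two endpoints and $s\ge 2$ interior $B$'s), and therefore many candidate targets $r$.

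The main obstacle is precisely this final matching: guaranteeing that for at least one admissible pair $(y,\beta)$ the forced target $r$ actually carries color $R$ (with the four chosen elements distinct). Because the $R$'s are dense—every other entry of every block is an $R$—and because translation and scaling (Lemma \ref{transandscale}) let me normalize the position of $g$, I expect the admissible targets to cover enough residues of $\Z_p$ to meet an $R$; where a direct count is insufficient, I would instead feed the endpoint patterns $BRY$ and $YRG$ (and their mirrors) into Lemma \ref{midpoint} to force long $R$-runs between the $B$-block and the $G$-block, which simultaneously constrains the relevant distances and supplies the $R$ needed to close the equation. Converting this covering/forcing heuristic into an airtight case analysis—rather than the local forcing of the block structure, which is routine—is where the real work lies.
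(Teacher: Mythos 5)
Your structural setup is correct and matches the paper's: the block $Y\,R\,(BR)^{s}\,Y$ forced around a $BRB$ (via Lemma \ref{nodouble}, dominance of $R$, and $2$-dominance of $Y$), the analogous $Y$-bounded $RG$-string around any $G$, and the target shape of solution $y+\beta=r+g$ with colors $\{Y,B\}$ versus $\{R,G\}$ are all exactly what the paper uses. But the proposal stops at the decisive step, and you say so yourself: nothing guarantees that some reachable target $r=y+\beta-g$ is colored $R$. ``The $R$'s are dense'' is not an argument --- the admissible targets form a small explicit set of residues determined by the two blocks, and density of $R$ elsewhere says nothing about those residues. Moreover, the appeal to Lemma \ref{transandscale} to ``normalize the position of $g$'' is not legitimate at this point: scaling by $i\neq\pm1$ destroys the standing normalization ($R$ dominant, $Y$ $2$-dominant, $YRB$ appears) under which every structural lemma you are quoting was derived; only translations are safe, and translations cannot change the relative distances that are the whole difficulty.

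The paper closes the gap with two ideas absent from your proposal. First, a parity selection: since $p$ is odd, of the two cyclic gaps between the (truncated) maximal $BR$-string and $GR$-string, one has odd length $j$; this is precisely the hypothesis of Lemma \ref{midpoint}, which then pins down an $RR$ pair at the exact center $y=i+\frac{j-1}{2}$ between the blocks. (You do gesture at Lemma \ref{midpoint} in your last sentence, but never secure its odd-distance hypothesis, which is where the primality/oddness of $p$ enters.) Second, a symmetric expansion with a shift by two: take $k$ minimal with $c(y-k)=Y$ or $c(y+1+k)=Y$; a pair colored $\{R,Y\}$ at these positions would combine with $\{i,i+j\}$ (colors $B,G$, same sum $2i+j$) into a rainbow solution, so both positions are $Y$; then the \emph{second} $B$ of the $BRB$ pattern, sitting at $i-2$, lets one shift the pairing: $(y-k)+(y+1+k-2)=2i+j-2=(i-2)+(i+j)$, with $c(y+1+k-2)=R$, giving the rainbow solution $\{i-2,\,i+j,\,y-k,\,y+1+k-2\}$. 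This shift-by-two step is the only place the full hypothesis $BRB$ (rather than the mere existence of a $B$) is used, and it is exactly the mechanism that produces an $R$ at a controlled position --- the thing your covering heuristic could not deliver. Without the midpoint-plus-expansion argument, your proof is a correct restatement of the problem, not a proof.
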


\begin{proof}
Without loss of generality, suppose that $BRB$ exists. 
Let $i_b,\dots, i_b+j_b$ be a maximal $BR$-string that starts and ends with $B$ (we are truncating the $R$ colored element at the start and end of a maximal $BR$-string). 
Note that this string alternates colors between $B$ and $R$.
This implies that $j_b\geq 2$ and $j_b$ is even. 
Let $i_g,\dots, i_g+j_g$ be a maximal $GR$-string that starts and ends with $G$. 
This implies that $j_g$ is even. 

Since $p$ is odd, either $i_g-i_b-j_b$ or $i_b-i_g-j_g$ is odd. 
Without loss of generality, suppose that $j=i_g-i_b-j_b$ is odd. 
Let $i=i_b+j_b$ so that $i+j=i_g$. 
In particular, we have chosen $i$  and $j$ such that $BRBRY$ appears at $i-2$ and $YRG$ appears at $i+j-2$, where $j$ is odd.

Let $y=i+\frac{j-1}{2}$. 
Notice that $c(y)=c(y+1)=R$ by Lemma \ref{midpoint}. Let $k$ be the smallest integer such that either $c(y-k)=Y$ or $c(y+1+k)=Y$. Notice that if $\{c(y-k),c(y+1+k)\}=\{R,Y\}$, then $\{i, i+j,y-k,y+1+k\}$ is a rainbow Sidon solution. Therefore, $c(y-k)=c(y+1+k)=Y$.
Notice that $c(i-2)=B$ and $c(y+1+k-2)=R$. 
However,
\[(y-k)+(y+1+k-2)=2i+j-2 =(i-2)+(i+j).\]
Therefore, $\{i-2,i+j,y-k,y+1+k-2\}$ is a rainbow Sidon solution, a contradiction. Therefore, neither $BRB$ nor $GRG$ can appear under the coloring $c$.
\end{proof}

\begin{lem}\label{1or3}
All $R$-strings have length $1$ or $3$.
\end{lem}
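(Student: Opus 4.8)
The plan is to first pin down the possible shapes of an $R$-string and then show that lengths $2$ and $\geq 4$ each would produce a rainbow solution to $S$, contradicting that $c$ is rainbow Sidon free (the global assumption of the section). Lengths $1$ and $3$ require no work, so the content is in forbidding the other values. The first step is a clean structural reduction. Since $R$ is dominant and, by Lemmas \ref{nodouble} and \ref{nodoubley}, none of $BB$, $GG$, $YY$ appears, no two consecutive elements can both be non-$R$; hence every non-$R$ element is flanked by $R$ on both sides, and $\Z_p$ decomposes into maximal $R$-strings separated by single non-$R$ markers. I then claim that every $R$-string of length at least $2$ has the form $YR^aY$: if $x,\dots,x+a-1$ is such a string with $a\geq 2$ and $c(x-1)=X_1\neq R$, then $c(x-1)$ and $c(x+1)=R$ lie at distance $2$ and are unequal, so the $2$-dominance of $Y$ forces $X_1=Y$, and the symmetric argument at the right end gives $c(x+a)=Y$.

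Next I would record the local shape of the other colours, which I need in order to have $Y$'s and $R$'s at controlled positions. Using Lemma \ref{2dom} (no $BRB$, no $GRG$) together with $2$-dominance of $Y$, a maximal $RB$-string cannot contain two $B$'s and so is exactly $RBR$, while the elements two steps from a $B$ are neither $R$ nor $B$ (maximality) nor $G$ (distance-$2$ dominance of $Y$), hence are $Y$. Thus every $B$ sits in a block $YRBRY$ and, identically, every $G$ in a block $YRGRY$; both blocks are present because $c$ is an exact $4$-coloring. In particular any $B$ at $\beta$ carries forced values $c(\beta\pm2)=Y$, $c(\beta\pm1)=R$, and likewise for $G$. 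A useful consequence, via the identity $x_1+\gamma=x_2+\beta$ for a $B$ at $\beta$ and $G$ at $\gamma$, is that $c$ being rainbow Sidon free forbids an $R$--$Y$ pair at any distance realized by a $B$--$G$ pair; the $B$-blocks already realize $R$--$Y$ pairs at distances $1$ and $3$, so no $B$--$G$ pair occurs at distance $1,2,$ or $3$. This is exactly why the $a=3$ string survives: the only balanced quadruples one can build from the four elements of a $YR^aY$ block together with a $B$ and a $G$ require a $B$--$G$ pair at distance $\leq a$ or a prescribed value of $\beta+\gamma$, and for $a=3$ all of these are already excluded.

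It remains to rule out $a=2$ and $a\geq 4$, and this is the step where I would reuse the machinery of Lemma \ref{2dom}. Placing the string at $0,\dots,a+1$, the run $c(1)=\dots=c(a)=R$ is a long block of $R$'s whose two flanking $Y$'s at $0$ and $a+1$ form a symmetric pair about the centre $M=\tfrac{a+1}{2}$. Combining this symmetric configuration with a $B$-block and a $G$-block through the Sidon identity, using Lemma \ref{midpoint} and the parity freedom afforded by $p$ being odd (to orient a distance so that it is odd), I would locate four positions carrying the colours $R,Y,B,G$ whose two pairs have equal sum. The even/odd behaviour of $a+1$ relative to the distance $4$ built into every $B/G$-block is what separates the forbidden cases $a=2$ (flanking $Y$'s at distance $3$) and $a\geq 4$ (distance $\geq 5$) from the surviving case $a=3$ (distance $4$).

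The main obstacle is precisely this sum-alignment. The computations at the end of the previous paragraph show that a $YR^aY$ block together with a single far-away $B$--$G$ pair does \emph{not} by itself force a rainbow solution; one must instead mix elements of the long $R$-run with the $Y$'s and $R$'s forced \emph{inside} a $B$-block and a $G$-block, and the symmetric-walk argument of Lemma \ref{2dom} (walk outward from the central $R$'s until the first $Y$'s are met on both sides, then shift by $2$ to pick up a $B$) is the template for doing so. Making that walk terminate in a genuine rainbow quadruple—verifying the four positions are distinct, carry distinct colours, and balance, all while respecting that $BB,GG,YY,BRB,GRG$ are absent—is the delicate part, and the parity bookkeeping there is exactly what excludes $a=2$ and $a\geq 4$ but not $a\in\{1,3\}$.
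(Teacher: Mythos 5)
Your structural preliminaries are correct and essentially reproduce what the paper's earlier lemmas give: every $R$-string of length at least $2$ is flanked by $Y$'s, every $B$ lies in a $YRBRY$ block, every $G$ in a $YRGRY$ block, and no $B$--$G$ pair occurs at distance $1$, $2$, or $3$. But the entire content of the lemma is the exclusion of lengths $2$ and $\geq 4$, and that step is never actually carried out: your last two paragraphs describe what you ``would'' do (``I would locate four positions\dots'') and concede that making the construction terminate in a genuine rainbow quadruple ``is the delicate part.'' Moreover, the local mechanism you propose cannot succeed as described, and your own bookkeeping shows why: for a block $YRRY$ the internal $R$--$Y$ differences are $\pm1,\pm2$, while for $YRRRY$ they are $\pm1,\pm2,\pm3$; both sets are already excluded as $B$--$G$ differences, and the sum conditions merely constrain $\beta+\gamma$ without yielding a contradiction. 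So a $YR^2Y$ block is locally exactly as consistent as a $YR^3Y$ block, and no parity argument confined to one $R$-string together with one $B$-block and one $G$-block can separate them.

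What is missing is a global argument, and this is precisely how the paper proceeds. It fixes $B$ at $i$ and $G$ at $i+m$, so that $m$ generates $\Z_p$, and propagates patterns along the orbit of $+m$: if $YRR$ appears at $j$ but not at $j+m$, a case analysis (using $c(i+2)=c(i+m-2)=Y$, which follows from Lemma \ref{2dom}) forces $c(j+m)=Y$, $c(j+m+1)=R$, $c(j+m+2)\in\{B,G\}$, and then $c(j+3)=R$, $c(j+4)=Y$, i.e.\ $YRRRY$ appears at $j$. Since $YRR$ cannot appear at every position of $\Z_p$, every $YRR$ reaches such a terminal position after finitely many $+m$ steps, and a backward induction ($YRRRY$ at $j$ and $YRR$ at $j-m$ imply $YRRRY$ at $j-m$) shows that every $YRR$ extends to $YRRRY$; hence every $R$-string of length at least $2$ has length exactly $3$. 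Nothing in your outline plays the role of this propagation along a generator, and without it (or some substitute global argument) the cases $a=2$ and $a\geq 4$ remain open.
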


\begin{proof}
Let elements $i$ and $i+m$ be colored with $B$ and $G$, respectively. 
Since $m\geq 1$ is a generator of $\Z_p$, either (1) there exists $YRR$ at some position $j$ such that $YRR$ does not appear at $j+m$, or (2) every $R$-string has length $1$ and the lemma is proven. 
We now assume (1) holds, and will show that $YRRRY$ appears at position $j$. 
We will proceed by considering the colors of $j+m, j+m+1,j+m+2$. 

First we will conclude that $c(j+m)=Y$ by ruling out other three options.
If $c(j+m)=R$, then $\{i, j+m, i+m, j\}$ is a rainbow Sidon solution. If $c(j+m)=B$, then $\{i+2,j+m,i+m,j+2\}$ is a rainbow Sidon solution because $c(i+2)=Y$ by Lemma \ref{2dom}.
If $c(j+m)=G$, then $\{i,j+m,i+m-2,j+2\}$ is a rainbow Sidon solution because $c(i+m-2)=Y$ by Lemma \ref{2dom}.
Therefore, $c(j+m)=Y$.
Furthermore, by Lemma \ref{nodoubley},  $c(j+m+1)=R$ since $R$ is dominant. 

Next consider the color of $j+m+2$. 
Notice that if $c(j+m+2)=Y$, then $\{i,j+m+2,i+m,j+2\}$ is a rainbow Sidon solution.
Furthermore, recall that $j$ was selected so that $YRR$ does not appear at $j+m$. 
Therefore, $c(j+m+2)\neq R$.
In particular, $c(j+m+2)\in\{B, G\}$. 

Now $c(j+4)\neq R$, else either $\{i,j+m+2,i+m-2,j+4\}$ or $\{i+2,j+m+2,i+m,j+4\}$ is a rainbow Sidon solution depending on the color of $j+m+2$. 
Therefore, $c(j+4)=Y$ and $c(j+3)=R$. 

Thus, we have shown that if there exists a $YRR$ at position $x$, then there exists a $k\geq 1$ such that $YRX$ with $X\in\{B,G\}$ appears at position $x+km$, $YRRRY$ appears at position $x+(k-1)m$ and $YRR$ appears at $x+\ell m$ for $0\leq \ell <k$. We will show that this behavior propagates backwards from $j$. 
In particular, suppose that $YRRRY$ appears at position $j$, and that $YRR$ appears at $j-m$.
Notice that $c(j-m+3), c(j-m+4)\in\{R,Y\}$ since $Y$ is $2$-dominant.
If $c(j-m+4)=R$, then $\{i+m, j-m+4, i, j+4\}$ is a rainbow Sidon solution.
Therefore, $c(j-m+4)=Y$, and so by Lemma \ref{nodoubley} we have $c(j-m+3)=R$. Hence $YRRRY$ appears at position $j-m$.

In summary, if there exists a $YRR$ at position $x$, then there exists a $k\geq 1$ such that $YRX$ with $X\in\{B,G\}$ appears at position $x+km$, $YRRRY$ appears at position $x+(k-1)m$ and $YRR$ appears at $x+\ell m$ for $0\leq \ell <k$. 
By the argument in the previous paragraph,  $YRRRY$ appears at $x+\ell m$ for every $0\leq \ell<k$. 
Thus, every $R$-string has length $1$ or $3$.
\end{proof}

With no further ado:

\begin{main1}
The coloring $c$ does not exist. In particular, $\rb(\Z_p,S)=4$ for all prime $p\geq 3$.
\end{main1}

\begin{proof}
Since $c$ is a $4$-coloring, there must exist $i$ and $j$ such that $c(i)=B$ and $c(j)= G$.
Furthermore, since $p$ is odd, there are distinct $m_1,m_2\in \Z_p$ such that $i+m_1=j$ and $i-m_2=j$ where either $m_1$ is odd or $m_2$ is odd.
Without loss of generality, suppose that $m_1$ is odd.
By Lemma \ref{midpoint}, let $y=i+\frac{j-1}{2}$ and $c(y)=c(y+1)=R$.  
Notice that Lemma \ref{1or3} guarantees that the $R$-string containing $y$ has length $3$. 
Therefore, either $c(y-1)=Y$ and $c(y+2)=R$ or $c(y-1)=R$ and $c(y+2)=Y$.
In either case, $\{i,y-1,y+2,j\}$ is a rainbow Sidon solution. This contradicts the assumption of the section, that $c$ is a rainbow Sidon free $4$-coloring of $\Z_p$. 
\end{proof}

\section{Rainbow Numbers  for $\rb(\mathbb{Z}_n)$}\label{Zn}

This section is broken into two subsections. The lower bound for $\rb(\Z_n,S)$ is shown in the first subsection, which will provide insight into where (and how) to look for rainbow solutions to the Sidon equation in the upper bound argument. The upper bound is shown in the second subsection. 

\subsection{Lower Bound}

We construct a lower bound coloring for $\Z_n$ by expanding a coloring for $\Z_{n/p}$. 
Essentially, we insert $p-1$ elements between each pair of elements in $\Z_{n/p}$ (taken in their natural cyclic ordering), and color the elements appropriately. 
The method for coloring the new inserted elements is specifically chosen to maintain $i$-dominant colors. 

\begin{lem}\label{lbcon}
Let $c$ be a rainbow Sidon free $r$-coloring of $\Z_n$ and $p$ be prime. If $p\leq 3$, then there exists a rainbow Sidon free $(r+1)$-coloring of $Z_{pn}$. If $p\geq 5$, then there exists a rainbow Sidon free $(r+2)$-coloring of $\Z_{pn}$. 
\end{lem}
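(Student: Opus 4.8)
The plan is to build the desired coloring of $\Z_{pn}$ directly from $c$ using the reduction-mod-$p$ homomorphism, which is well defined since $p\mid pn$. Write $\pi:\Z_{pn}\to\Z_p$ for $\pi(x)=x\bmod p$. I would color $\Z_{pn}$ in two parts. On the subgroup $\pi^{-1}(0)=\{0,p,2p,\dots,(n-1)p\}$, which is isomorphic to $\Z_n$ via $pa\mapsto a$, I transfer $c$ by giving $pa$ the color $c(a)$; this uses exactly the $r$ original colors, since $c$ is exact. On the remaining elements (those with $\pi(x)\neq 0$) I would use a \emph{symmetric} coloring $d:\Z_p\setminus\{0\}\to\{\text{new colors}\}$, meaning $d(r)=d(-r)$ for every $r$, chosen to use as many fresh colors as symmetry permits. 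The resulting palette is the $r$ old colors together with the new ones, and the count of usable new colors is exactly where the split $p\le 3$ versus $p\ge 5$ enters.

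The engine of the argument is that $\pi$ carries Sidon solutions to Sidon solutions: if $x_1+x_2=x_3+x_4$ in $\Z_{pn}$, then $\pi(x_1)+\pi(x_2)=\pi(x_3)+\pi(x_4)$ in $\Z_p$. Assume for contradiction the new coloring admits a rainbow solution $\{x_1,x_2,x_3,x_4\}$, and let $t$ be the number of its elements with nonzero residue. Since new elements are colored only by $d$ and the four colors must be distinct, $t$ cannot exceed the number of new colors. I would then dispatch the cases on $t$. If $t=0$, all four elements lie in $\pi^{-1}(0)$, and dividing the relation by $p$ produces a rainbow Sidon solution of $c$ in $\Z_n$, contradicting the hypothesis. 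If $t=1$, the projected equation forces the single nonzero residue to be $0$, a contradiction. If $t=2$, then in every placement of the two new elements the projected relation forces either two equal residues (new elements on opposite sides) or two residues that are negatives of each other (new elements on the same side); by the symmetry $d(r)=d(-r)$ the two new elements then share a color, again contradicting rainbowness.

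This shows the construction is rainbow Sidon free as soon as $d$ is symmetric \emph{and} $t\ge 3$ is impossible, and the latter is guaranteed precisely by capping the number of new colors at $2$: with at most two new colors, $t\le 2$, so the case $t=3$ (which would need three pairwise distinct new colors with $r_1+r_2=r_3$) never arises. To obtain the strongest bound I would use one new color per symmetric pair $\{r,-r\}$, capped at $2$. For $p\ge 5$ the $p-1$ nonzero residues split into $(p-1)/2\ge 2$ such pairs, so I can realize exactly two new colors (say $\{1,-1\}$ one color and all other pairs a second), both genuinely used, giving the exact $(r+2)$-coloring. For $p\le 3$ there is at most one symmetric pair ($\{1\}$ when $p=2$, $\{1,2\}$ when $p=3$), so a symmetric $d$ can use only one new color, yielding the exact $(r+1)$-coloring; with a single new color $t\le 1$, so the $t=2$ case is moreover vacuous.

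The main obstacle, and the only place requiring real care, is the bookkeeping in the $t=2$ and $t=3$ cases: one must check each placement of the new elements among $x_1,x_2,x_3,x_4$ to confirm the projected Sidon relation always forces a repeated color, and must identify that capping the new palette at $2$ is exactly what eliminates $t\ge 3$. Everything else---well-definedness of $\pi$, exactness of the final coloring (disjoint palettes, all new colors used), and the cases $t\in\{0,1\}$---is routine. I would also note that for $p\ge 3$ the symmetry step uses that $p$ is odd, so that $r\neq -r$ for nonzero $r$ and the residues genuinely pair up.
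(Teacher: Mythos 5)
Your proposal is correct and is essentially the paper's own argument: the paper colors $\langle p\rangle\cong\Z_n$ by pulling back $c$ and colors the remaining elements by residue class symmetrically (one new color for $p\leq 3$; for $p\geq 5$, one new color on residues $\pm 1$ and a second on all other nonzero residues), then runs the same case analysis on how many solution elements lie outside the subgroup. Your pigeonhole remark handling three or more elements outside the subgroup is a point the paper leaves implicit, but the construction and verification coincide.
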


\begin{proof}
Assume that $p\leq 3$. 
Let 
\[\hat c(x)=\begin{cases}c(x/p)& x\equiv 0 \mod p\\
r+1& \text{otherwise}
\end{cases}\] be an $(r+1)$-coloring of $\Z_{pn}$. 
Let $X=\{x_1,x_2,x_3,x_4\}\subseteq  \Z_{pn}$ such that $x_1+x_2=x_3+x_4$.

Without loss of generality, if $x_1,x_2,x_3\in \langle p\rangle$, then $x_4\in \langle p\rangle$ where $\langle p\rangle$ is the subgroup  of $\Z_{pn}$ generated by $p$. 
In this case, $X$ is not rainbow, since $c$ is a rainbow free coloring of $\langle p\rangle\cong \Z_n$. 
Furthermore, if $x_i,x_j\notin \langle p\rangle$ for  $1\leq i<j\leq 4$, then $\hat c(x_i)=\hat c(x_j)$ and $X$ is not rainbow. Thus, $\hat c$ is a rainbow Sidon free $(r+1)$-coloring of $\Z_{pn}$.

Assume that $p\geq 5$. 
Let 
\[\hat c(x)=\begin{cases}
c(x/p)& x\equiv 0 \mod p\\
r+1& x\equiv 1, p-1\mod p\\
r+2&\text{otherwise}
\end{cases}
\] be an $(r+2)$-coloring of $\Z_{pn}$. 
Let $X=\{x_1,x_2,x_3,x_4\}\subseteq \Z_{pn}$ such that $x_1+x_2=x_3+x_4.$ 

As in the previous case, $c$ is a rainbow free coloring of $\langle p\rangle\cong \Z_n$. Therefore, if any three elements in $X$ are in $\langle p\rangle$, then $X$ is not rainbow. Thus, assume that exactly two elements in $X$ are in $\langle p\rangle$. In particular, assume that $x_i,x_j\in \langle p\rangle$.

Without loss of generality, if $i=1$ and $j=2$, then $x_3+x_4= s_1p$ for some integer $s_1$. Therefore, $x_3\equiv -x_4\mod p$ and $\hat c(x_3)=\hat c(x_4)$.

Without loss of generality, if $i=1$ and $j=3$, then $x_2\equiv x_4\mod p$. 
Therefore, $\hat c(x_2)=\hat c(x_4)$. 

In either case, $X$ is not rainbow. Thus, $\hat c$ is a rainbow Sidon free $(r+2)$-coloring of $\Z_{pn}$.
\end{proof}

Repeatedly applying Lemma \ref{lbcon} gives a lower bound for $\rb(\Z_n,S)$.
Notice that the statement of Proposition \ref{lb1} selects $p_m$ to maximize the number of colors used in the construction.

\begin{prop}\label{lb1}
Let $n=p_1\cdots p_k$ be a prime factorization such that $p_i\leq p_j$ whenever $i<j$. Let $m$ be the smallest index such that $p_m\geq 3$ (or $m=k$ if this index does not exist),  $f_1=|\{p_i: p_i\leq 3, i\neq m\}|$, and  $f_2=|\{p_i:p_i\geq 5, i\neq m\}|$. Then 
\[\rb(\Z_{p_m},S)+f_1+2f_2
\leq \rb(\Z_n,S).\]
\end{prop}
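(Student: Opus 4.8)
The plan is to turn the left-hand side into the number of colors of a single explicitly constructed rainbow Sidon free coloring of $\Z_n$, obtained by iterating Lemma \ref{lbcon}. First I would record the standard translation between colorings and the rainbow number: a rainbow Sidon free exact $R$-coloring of $\Z_n$ exists if and only if $\rb(\Z_n,S)>R$. The nontrivial direction needs monotonicity of the forcing property, namely that if every exact $r$-coloring admits a rainbow solution then so does every exact $(r+1)$-coloring. To see this I would take an exact $(r+1)$-coloring $c'$, merge two of its classes into an exact $r$-coloring $c$, invoke the hypothesis to get a rainbow solution under $c$, and note that because $c'$ refines $c$ the four elements of that solution still carry four distinct $c'$-colors. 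Granting this, it suffices to build a rainbow Sidon free exact coloring of $\Z_n$ with exactly $\rb(\Z_{p_m},S)-1+f_1+2f_2$ colors.

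For the construction I would start from the base group $\Z_{p_m}$: by the definition of the rainbow number (and since $\rb(\Z_{p_m},S)\geq 2$ always) there is a rainbow Sidon free exact $(\rb(\Z_{p_m},S)-1)$-coloring of $\Z_{p_m}$. I would then list the remaining primes $p_i$ (for $i\neq m$) in any order and feed them one at a time into Lemma \ref{lbcon}, at each step multiplying the current group order by the next prime. A prime $\leq 3$ contributes one new color and a prime $\geq 5$ contributes two; since $4$ is not prime, the $k-1$ multiplier primes partition into $f_1$ of the first kind and $f_2$ of the second, so the total number of colors added is $f_1+2f_2$ and the final group has order $p_m\prod_{i\neq m}p_i=n$.

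Combining the two parts, the resulting coloring is a rainbow Sidon free exact coloring of $\Z_n$ with $\rb(\Z_{p_m},S)-1+f_1+2f_2$ colors, which by the translation above yields $\rb(\Z_n,S)\geq \rb(\Z_{p_m},S)+f_1+2f_2$.

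I expect the only real care points, rather than obstacles, to be (i) the monotonicity/merging argument that licenses passing from an explicit coloring to a lower bound on $\rb$, and (ii) checking that each application of Lemma \ref{lbcon} genuinely produces an \emph{exact} coloring, so that the color count increases by exactly $1$ or $2$ at every step; this holds because $pn>n$ forces non-multiples of $p$ to appear and, when $p\geq 5$, both the residue set $\{1,p-1\}$ and the remaining nonzero residues are nonempty. Everything else is routine bookkeeping.
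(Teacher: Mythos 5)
Your proposal is correct and follows essentially the same route as the paper: start from a rainbow Sidon free $(\rb(\Z_{p_m},S)-1)$-coloring of $\Z_{p_m}$ and apply Lemma \ref{lbcon} once for each prime $p_i$ with $i\neq m$, gaining one color per prime $\leq 3$ and two per prime $\geq 5$. The paper leaves implicit the two care points you spell out (the merging/monotonicity argument translating a rainbow-free exact coloring into a strict lower bound on $\rb$, and the exactness of the colorings produced by Lemma \ref{lbcon}), so your write-up is simply a more detailed version of the same argument.
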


\begin{proof}  By starting with a rainbow free $(\rb(\Z_{p_m},S)-1)$-coloring of $\Z_{p_m}$, we can construct a rainbow free $r$-coloring  with 
\[r=\rb(\Z_{p_m},S)-1+f_1+2f_2\]
by repeatedly applying Lemma \ref{lbcon} for primes $p_i$, $i\neq m$.
\end{proof}

\subsection{Upper bound}

Since the lower bound construction expands colorings depending on primes $p$, it is intuitive that an upper bound argument would reduce colorings modulo $p$ until a rainbow Sidon solution can be found.
Let $t$ be a be a positive integer that divides $n$. 
Let $R_i=i+\langle t\rangle$ be the $i^{th}$ coset of the subgroup generated by $t$ in $\Z_n$.
This is notation that is consistent with work in \cite{BKKTTY}. 
Lemma \ref{maximumcoset} identifies which coset of $t$ we want to narrow in on. 

\begin{lem}\label{maximumcoset}
Let $t=n/p$ ($p$ prime) divide $n$ and $c$ be a rainbow Sidon free coloring of $\Z_n$. Consider the cosets of $\langle t\rangle$,  $R_i$ with $0\leq i < t$. There exists an index $j$ such that $|c(R_i)\setminus c(R_j)|\leq 1$ for all $0\leq i<t$. 
\end{lem}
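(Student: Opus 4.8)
The plan is to argue by contradiction, exploiting the fact that each coset of $\langle t\rangle$, under the natural identification, carries a rainbow Sidon free coloring of $\Z_p$. Fixing a generator $t$ of $\langle t\rangle$, I identify $R_i$ with $\Z_p$ via $\phi_i(s)=i+st$, so that $c$ restricts to a coloring $c_i=c\circ\phi_i$ of $\Z_p$. If $x_1,x_2,x_3,x_4\in R_i$ with $x_k=\phi_i(s_k)$, then $x_1+x_2=x_3+x_4$ is equivalent to $s_1+s_2=s_3+s_4$ in $\Z_p$, so a rainbow solution inside a single coset is a rainbow solution in $\Z_n$. By Theorem~\ref{main1} (for $p\ge 3$; the case $p=2$ is covered by the same reasoning, where $|c(R_i)|\le 2$ trivially), each $c_i$ uses at most three colors, so $|c(R_i)|\le 3$ for every $i$. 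I would then take $j$ to maximize $|c(R_j)|$ and suppose, for contradiction, that some coset $R_i$ satisfies $|c(R_i)\setminus c(R_j)|\ge 2$, with $c_3,c_4\in c(R_i)\setminus c(R_j)$ distinct. Maximality of $|c(R_j)|$ forces $|c(R_j)\setminus c(R_i)|\ge |c(R_i)\setminus c(R_j)|\ge 2$, so there are also distinct colors $c_1,c_2\in c(R_j)\setminus c(R_i)$; the four colors $c_1,c_2,c_3,c_4$ are then pairwise distinct.

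The mechanism for producing a rainbow solution is the $2K_2$ observation from the proof of Lemma~\ref{dom}: for any nonzero difference $d$, if there are $x,y$ with $\{c(x),c(x+d)\}$ and $\{c(y),c(y+d)\}$ disjoint and rainbow, then $x+(y+d)=(x+d)+y$ is a rainbow Sidon solution. Applied to elements $a,b\in R_j$ colored $c_1,c_2$ and $u,w\in R_i$ colored $c_3,c_4$, a solution $a+w=b+u$ requires $a-b=u-w$ inside $\langle t\rangle\cong\Z_p$, i.e.\ that the difference sets $\mathcal A_1-\mathcal A_2$ and $\mathcal B_3-\mathcal B_4$ meet, where $\mathcal A_k,\mathcal B_k\subseteq\Z_p$ record the positions of color $c_k$ in the two cosets. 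By Cauchy--Davenport these sets have sizes at least $|\mathcal A_1|+|\mathcal A_2|-1$ and $|\mathcal B_3|+|\mathcal B_4|-1$, so they intersect whenever the four classes are large enough that the size sum exceeds $p$. In particular, when each of $R_i$ and $R_j$ uses only two colors the relevant classes partition $\Z_p$, the intersection is forced, and the contradiction follows at once.

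The main obstacle is the regime where $R_i$ and $R_j$ each use three colors sharing a common, heavily dominant color $X$, so that the extra classes $c_1,c_2$ and $c_3,c_4$ are too small for this naive count to close. To handle it I would route the Sidon solution through $X$ instead of through four colors confined to two cosets: seeking a solution $x_1+x_2=x_3+x_4$ with colors $c_1,X,c_3,c_4$ places the $X$-colored element in the coset $R_{2i-j}$ at a position in $(\mathcal B_3+\mathcal B_4-\mathcal A_1)$, and since $X$ occupies almost all of that coset (by the within-coset dominance supplied by Lemma~\ref{dom}), the target set of $X$-positions is nearly all of $\Z_p$, making the sumset intersection easy. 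The delicate point, and where I expect the real work to lie, is verifying that the dominant color is genuinely large in the auxiliary coset $R_{2i-j}$---noting that the within-coset dominant colors of $R_i$, $R_j$, and $R_{2i-j}$ need not a priori coincide with the shared color $X$---and organizing the finitely many residual configurations (few colors present, or monochromatic cosets) into a clean case analysis; this is also the step most sensitive to the precise choice of $j$.
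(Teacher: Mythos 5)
Your setup (restrict to cosets, invoke Theorem~\ref{main1} to get $|c(R_i)|\le 3$, take $j$ maximizing $|c(R_j)|$, extract four pairwise-distinct colors $c_1,c_2\in c(R_j)\setminus c(R_i)$ and $c_3,c_4\in c(R_i)\setminus c(R_j)$) is sound, and your Cauchy--Davenport count does settle the case where both cosets are bichromatic. But the case you yourself flag as ``where the real work lies'' is a genuine gap, and the repair you sketch does not go through. When each coset is $3$-colored with a shared color $X$ and the classes of $c_1,c_2,c_3,c_4$ are small (e.g.\ singletons), you propose to route the solution through an $X$-colored element of the coset $R_{2i-j}$, ``by the within-coset dominance supplied by Lemma~\ref{dom}.'' That lemma only applies to rainbow Sidon free \emph{$4$-colorings} of $\Z_p$; the restriction of $c$ to $R_{2i-j}$ uses at most $3$ colors, for which the $2K_2$ argument gives no dominance whatsoever (any graph on $3$ vertices is $2K_2$-free). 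Worse, at this stage of the argument nothing constrains $c(R_{2i-j})$ at all: the color $X$ need not appear in $R_{2i-j}$, let alone occupy ``almost all'' of it. So the central case remains open, and the finitely-many-configurations cleanup you defer is not a finite check but the whole problem.

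The idea you are missing is that you do not need four elements carrying ``new'' colors, nor any counting: two suffice, because rainbow-freeness can be converted into a \emph{translation invariance} of $c$ on $R_j$. This is how the paper argues. Take $x_1,x_2\in R_i$ with $c(x_1)\ne c(x_2)$ and $c(x_1),c(x_2)\notin c(R_j)$. For \emph{any} $x_3\in R_j$, the element $x_4=x_1+x_3-x_2$ lies again in $R_j$ and satisfies $x_1+x_3=x_2+x_4$; since $c(x_3),c(x_4)\in c(R_j)$ are automatically distinct from $c(x_1),c(x_2)$, rainbow-freeness forces $c(x_4)=c(x_3)$. Thus $c$ restricted to $R_j$ is invariant under translation by $x_1-x_2=(s_1-s_2)t$, which is a nonzero element of $\langle t\rangle\cong\Z_p$ and hence a generator of it. Iterating, $R_j$ is monochromatic, so $|c(R_j)|=1<2\le|c(R_i)|$, contradicting the maximality of $|c(R_j)|$. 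This one-paragraph argument needs no bound on $|c(R_i)|$, no Cauchy--Davenport, and no auxiliary coset; if you want to salvage your write-up, replace everything after the choice of $j$ with this periodicity step.
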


\begin{proof}
Let $j$ be the index that maximizes $|c(R_j)|$. 
For the sake of contradiction, assume that $|c(R_i)\setminus c(R_j)|\geq 2$ for index $i$. 
This implies that there exists $x_1,x_2\in R_i$ such that $c(x_1)\neq c(x_2)$ and $c(x_1),c(x_2)\notin c(R_j)$. 
Let $x_3\in R_j$. 
Notice that 
\begin{align*}
    x_1&=s_1t+i\\
    x_2&=s_2t+i\\
    x_3&=s_3t+j.
\end{align*}
Therefore, \[x_4=x_1+x_3-x_2=(s_1-s_2+s_3)t+j.\]
Since $c$ is rainbow Sidon free, $c(x_4)=c(x_3)$. Notice that $x_4-x_3= t(s_1-s_2)$ and that $|R_j|=p$. 
Since $s_1-s_2\neq 0$, and $s_1-s_2$ is relatively prime to $p$, we can conclude that $|c(R_j)|=1$. 
This contradicts our choice of index $j$. 
\end{proof}

The next lemma focuses our attention on the coset with the most number of colors under a coloring $c$. Furthermore, it controls the number of colors lost in the process.
 
\begin{lem}\label{ublemma}
Let $p$ be a prime divisor of $n$ and $pt=n$. Then 
\[ \rb(\Z_n,S)\leq \rb(\Z_p,S)+\rb(\Z_t,S)-2.\]
\end{lem}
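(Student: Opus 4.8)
The plan is to argue by contradiction on the number of colors: I will show that any rainbow Sidon free coloring $c$ of $\Z_n$ uses at most $\rb(\Z_p,S)+\rb(\Z_t,S)-3$ colors, which is equivalent to the stated bound. Throughout I work with the cosets $R_i=i+\langle t\rangle$ of the order-$p$ subgroup $\langle t\rangle$, of which there are $t$, indexed by $\Z_t\cong \Z_n/\langle t\rangle$.

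First I would pin down the special coset. Applying Lemma \ref{maximumcoset}, fix $j$ with $|c(R_i)\setminus c(R_j)|\le 1$ for all $i$. Since $\langle t\rangle\cong\Z_p$ and a Sidon equation among elements $j+s_bt$ of $R_j$ holds in $\Z_n$ exactly when $s_1+s_2\equiv s_3+s_4\pmod p$, the restriction of $c$ to $R_j$ is, after relabeling by $s$, a rainbow Sidon free coloring of $\Z_p$; hence $|c(R_j)|\le \rb(\Z_p,S)-1$. Writing $E$ for the set of \emph{extra} colors (those appearing in $\Z_n$ but not in $c(R_j)$), the total number of colors is exactly $|c(R_j)|+|E|$, so it remains to prove $|E|\le \rb(\Z_t,S)-2$.

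To bound $|E|$, I would pass to a quotient coloring $\gamma$ of $\Z_t$: by Lemma \ref{maximumcoset} each coset $R_i$ carries at most one extra color, so set $\gamma(i)$ equal to that extra color when it exists and to a fixed color $c_0\in c(R_j)$ otherwise. Because $j$ is never special, $c_0$ is realized, and $\gamma$ is an exact coloring of $\Z_t$ using precisely $|E|+1$ colors. The heart of the argument is that $\gamma$ inherits the rainbow Sidon free property, which forces $|E|+1\le \rb(\Z_t,S)-1$ and completes the count
\[
|c(R_j)|+|E|\le(\rb(\Z_p,S)-1)+(\rb(\Z_t,S)-2)=\rb(\Z_p,S)+\rb(\Z_t,S)-3.
\]

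The main obstacle, and the only real content, is the lifting step: a rainbow Sidon solution in $\gamma$ need not correspond to one in $\Z_n$, since reducing modulo $\langle t\rangle$ discards the $\langle t\rangle$-component (a ``carry'' in $\Z_p$). I would resolve this as follows. A rainbow solution $a_1+a_2=a_3+a_4$ in $\Z_t$ has four distinct colors, hence four distinct indices and at most one dummy index; relabel so that the dummy (if any) is $a_4$, making $a_1,a_2,a_3$ special. For $b\in\{1,2,3\}$ choose $e_b\in R_{a_b}$ with $c(e_b)$ equal to the extra color $\gamma(a_b)$, and then define the fourth element by the forced difference $x_4=e_1+e_2-e_3$. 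Since $a_1+a_2-a_3\equiv a_4\pmod t$, this $x_4$ lands in $R_{a_4}$, so $\{e_1,e_2,e_3,x_4\}$ already satisfies the Sidon equation in $\Z_n$ with no carry left to fix. The key point is that $c(x_4)$ is automatically distinct from the three extra colors $\gamma(a_1),\gamma(a_2),\gamma(a_3)$: by Lemma \ref{maximumcoset} the coset $R_{a_4}$ contains at most one extra color, so $c(x_4)$ lies in $c(R_j)\cup\{\text{extra color of }R_{a_4}\}$, and either alternative is disjoint from $\{\gamma(a_1),\gamma(a_2),\gamma(a_3)\}$. Thus the lift is a genuine rainbow Sidon solution in $\Z_n$, contradicting rainbow-freeness and establishing that $\gamma$ is rainbow Sidon free. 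I expect verifying this color-distinctness, together with the bookkeeping that always places the dummy index at the solved-for position, to be the delicate part, whereas the coset count and the $\Z_p$-restriction are routine.
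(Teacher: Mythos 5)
Your proposal is correct and takes essentially the same route as the paper's proof: both invoke Lemma \ref{maximumcoset} to isolate a maximal coset, restrict $c$ to it to obtain a rainbow Sidon free coloring of $\Z_p$, build the quotient coloring of $\Z_t$, and refute a hypothetical rainbow solution there by choosing representatives for the three non-dummy indices and solving for the fourth element, whose color is controlled by the one-extra-color property. The only cosmetic differences are that the paper shifts the coloring so the special coset is the subgroup $\langle t\rangle$ itself and uses a fresh dummy color $\alpha$ where you reuse an existing color $c_0\in c(R_j)$.
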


\begin{proof}
Let $c$ be a rainbow Sidon free $(\rb(\Z_n,S)-1)$-coloring of $\Z_n$. 
Let $R_i$ be the $i^{th}$ coset of $\langle t\rangle$ in $\Z_n$. 
By Lemma \ref{maximumcoset}, there exists index $j$ such that $|c(R_i)\setminus c(R_j)|\leq 1$ for $0\leq i< t$. 
Notice that $c$ is rainbow Sidon free if and only if $c'$ given by $c'(x)=c(x+j)$ is rainbow Sidon free. 
Since $R_0\cong \Z_p$ is a subgroup of $\Z_n$, it follows $c'$ must be a rainbow Sidon free coloring of $\Z_p$. Furthermore, $|c'(R_i)\setminus c'(R_0)|\leq 1$ for $0\leq i <t$. 

Let 
\[\hat c(x)=\begin{cases}
i& \{i\}=c'(R_x)\setminus c'(R_0)\\
\alpha & c'(R_x)\subseteq c'(R_0)\end{cases}\] be a coloring of $\Z_t$ (so that $\alpha$ is a color not used by $c'$).
For the sake of contradicion, let $\{x_1,x_2,x_3,x_4\}\subseteq \Z_t$ be rainbow given $\hat c $ such that $x_1+x_2=x_3+x_4$. 
Without loss of generality, assume that $\hat c(x_1),\hat c(x_2),\hat c(x_3)\neq \alpha$. 
Therefore, there exists $y_i\in R_{x_i}$ such that $c'(y_i)=\hat c(x_i)$ for $1\leq i \leq 3$. 
Notice that $y_4=y_1+y_2-y_3\in R_{x_4}$ and that $c'(y_4)\neq c'(y_1),c'(y_2),c'(y_3)$. 
Therefore, $\{y_1,y_2,y_3,y_4\}$ is a rainbow Sidon solution in $\Z_n$ given $c'$; this is a contradiction. 
Thus, $\hat c$ is a rainbow Sidon free coloring of $\Z_t$. 

We can combine all this information to bound the number of colors of used by $c$. In particular,
\[c(\Z_n)=c'(\Z_n)= (c'(R_0)\cup \hat c(\Z_t))\setminus \{\alpha\}.\] 
This implies that 
\[\rb(\Z_n,S)-1=|c(\Z_n)|=|c'(R_0)|+|\hat{c}(\Z_t)|-1\leq  \rb(\Z_p,S)-1 +\rb(\Z_t,S)-2,\]
completing the proof.
\end{proof}

Proposition \ref{ub1} is the result of repeatedly applying Lemma \ref{ublemma}. 
In this sense, it reverses the process used in the construction of a rainbow Sidon free coloring in the proof of Proposition \ref{lb1}. 
It is helpful to recall that $\rb(\Z_p,S)=4$ for all primes $p\geq 3$ while comparing these two propositions.

\begin{prop}\label{ub1}
Let $n=p_1\cdots p_k$ be a prime factorization of $n$. Then 
\[\rb(\Z_n,S)\leq 2(1-k)+\sum_{i=1}^{k} \rb(\Z_{p_i},S).\]
\end{prop}

\begin{proof}
Recursively apply Lemma \ref{ublemma}.
\end{proof}

Notice that the upper bound given in Proposition \ref{ub1} does not meet the lower bound in Proposition \ref{lb1}.
In particular, if $3^k|n$ for $k\geq 2$, then the upper bound exceeds the lower bound by at least $k-1$. 
This suggests that Lemma \ref{ublemma} is too generous when $p=3$.

To improve the upper bound, we want to focus on the case when $n=3t.$ 
Suppose that $c$ is an $r$-coloring of $\Z_n$. 
Since the goal is to show that $c$ admits a rainbow solution to the Sidon equation, we will assume that $c$ is rainbow Sidon free and pursue a contradiction. 
Partition $\Z_n$ into cosets of $\langle t\rangle$ denoted $R_i$, $1\leq i\leq t$.
By Lemma \ref{maximumcoset}, there exists an index $j$ such that $|c(R_i)\setminus c(R_j)|\leq 1$ for all $0\leq i <t$. By shifting the coloring, we can assume that $j=0$. 

\begin{lem}\label{monolem}
If $|c(R_i)\setminus c(R_0)|=1$ and $|c(R_0)|=3$, then $|c(R_i)|=1$. 
\end{lem}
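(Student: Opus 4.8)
The plan is to exploit the fact that, since $n=3t$, the subgroup $\langle t\rangle=\{0,t,2t\}$ has order $3$, so each coset $R_i=\{i,\,i+t,\,i+2t\}$ has exactly three elements. The hypothesis $|c(R_0)|=3$ means the three elements of $R_0$ carry three distinct colors, say $A=c(0)$, $B=c(t)$, $C=c(2t)$. The hypothesis $|c(R_i)\setminus c(R_0)|=1$ produces a single color $D\notin\{A,B,C\}$ occurring on $R_i$, with every other color on $R_i$ lying in $\{A,B,C\}$; since $i\neq 0$ the cosets $R_0$ and $R_i$ are disjoint. Writing a typical element of $R_0$ as $\alpha t$ and of $R_i$ as $i+\beta t$ with $\alpha,\beta\in\{0,1,2\}$, and abbreviating $\gamma_\beta:=c(i+\beta t)$, the goal is to show $\gamma_0=\gamma_1=\gamma_2=D$.

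The engine of the argument is a family of Sidon quadruples that each use two elements of $R_0$ and two of $R_i$. In $\Z_{3t}$ one has $\alpha t+(i+\beta t)=\alpha' t+(i+\beta' t)$ if and only if $\alpha+\beta\equiv\alpha'+\beta'\pmod 3$. I would first record the elementary observation that for any unordered pair $\{\alpha,\alpha'\}$ of distinct indices (selecting two elements, hence two of the colors $A,B,C$, from $R_0$) and any unordered pair $\{\beta,\beta'\}$ of distinct indices (selecting two of the $\gamma$'s from $R_i$), the orientations can be chosen so that $\alpha-\alpha'\equiv\beta'-\beta\pmod 3$; this is possible because in $\Z_3$ the two orientations of a pair realize both nonzero residues. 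Consequently every one of the $3\times 3=9$ combinations of a color-pair from $\{A,B,C\}$ with an index-pair from $\{\gamma_0,\gamma_1,\gamma_2\}$ is realized by a genuine Sidon quadruple of four distinct group elements (distinctness is automatic, since we take two distinct elements from each of the disjoint cosets $R_0$ and $R_i$).

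With this in hand the conclusion is immediate. Since $c$ is rainbow Sidon free, each of these nine quadruples must fail to be rainbow, i.e.\ two of its four colors coincide. Suppose, for contradiction, that $R_i$ is not monochromatic. Because $D$ occurs on $R_i$ while $R_i$ uses a second color, there are indices $a\neq b$ with $\gamma_a=D$ and $\gamma_b=E$ for some $E\in\{A,B,C\}$. Applying the realizability observation to the index-pair $\{a,b\}$ together with the color-pair $\{A,B,C\}\setminus\{E\}$ yields a Sidon quadruple whose four colors are exactly $\{A,B,C\}\setminus\{E\}$ together with $D$ and $E$, that is, the four distinct colors $A,B,C,D$ --- a rainbow solution, a contradiction. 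Hence $R_i$ cannot simultaneously display $D$ and a color of $\{A,B,C\}$; as $D$ does occur, $R_i$ is monochromatic and $|c(R_i)|=1$.

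The only real work is the middle step: verifying that all nine color/index pairings are actually realized as Sidon quadruples. This is a short computation in $\Z_3$ via the criterion $\alpha+\beta\equiv\alpha'+\beta'$, and I expect it to be the crux --- once surjectivity of these pairings is established, rainbow-freeness does the rest with no further case analysis.
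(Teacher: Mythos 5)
Your proof is correct and uses essentially the same argument as the paper: both build a Sidon quadruple from two elements of $R_0$ and two of $R_i$, using the freedom to swap the orientation of one pair modulo $3$ to satisfy the equation, and then exploit the rainbowness of $R_0$ to choose the two elements of $R_0$ whose colors avoid the offending color $E$. The paper packages this as a ``neighbor'' argument (showing $c((s\pm1)t+i)=c(st+i)$ for the $D$-colored element $st+i$), while you package it as realizability of all nine cross-coset pairings, but the combinatorial content is identical.
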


\begin{proof}
Notice that $R_0=\{0,t,2t\}$, while $R_i=\{i,t+i,2t+i\}$.
Without loss of generality, suppose that $c(st+i)\notin c(R_0)$ for some $0\leq s\leq 2$.
For the sake of contradiction, suppose that $c((s\pm 1)t+i)\neq c(st+i)$ (where $(s\pm 1)t+i$ is taken modulo $n$).
By assumption, there exists $s't,(s'+1)t\in R_0 $ such that $c((s\pm 1)t+i)\notin \{ c(s't),c((s'\pm 1)t)\}.$ 
However, $(s'\pm 1)t+st+i=s't+(s\pm 1)t+i$. 
Since $\{s't,(s\pm 1)t+i,(s'\pm 1)t,st+i\}$ is rainbow under~$c$, a contradiction.
\end{proof} 

Rainbow numbers for the Schur equation $x_1+x_2=x_3$ will be useful in analyzing the rainbow number for the Sidon equation when $9$ divides $n$. 
For convenience, we state the relevant results below. 
Let $\rb(\Z_n,1)$ denote the fewest number of colors that guarantee a rainbow solution to $x_1+x_2=x_3$ in $\Z_n$. 

\begin{thm}[Theorem 1 in \cite{BKKTTY}]\label{schurprimes}
For a prime $p\geq 5$, $\rb(\Z_p,1)=4$. 
\end{thm}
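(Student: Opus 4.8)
The plan is to establish the two inequalities $\rb(\Z_p,1)\geq 4$ and $\rb(\Z_p,1)\leq 4$ separately, where $1$ denotes the Schur equation $x_1+x_2=x_3$. Throughout, a rainbow solution is a triple of \emph{distinct} elements $x_1,x_2,x_3$ with $x_1+x_2=x_3$ receiving three distinct colors; since there are only three variables, a rainbow solution uses exactly three colors.

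For the lower bound I would exhibit an exact rainbow-free $3$-coloring of $\Z_p$. Color $0$ with color $1$, partition $\Z_p\setminus\{0\}$ into the $(p-1)/2$ antipodal pairs $\{x,-x\}$, and color each pair monochromatically, using color $2$ on at least one pair and color $3$ on at least one pair (possible since $p\geq 5$ gives at least two pairs). This is an exact $3$-coloring. Any rainbow solution must use all three colors and hence must involve $0$, the unique element of color $1$. If $0=x_1$ or $0=x_2$ then two of the $x_i$ coincide, so the triple is not a valid solution; if $0=x_3$ then $x_1=-x_2$, so $x_1$ and $x_2$ lie in a common antipodal pair and share a color. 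Thus there is no rainbow solution and $\rb(\Z_p,1)\geq 4$.

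For the upper bound I would argue by contradiction: suppose $c$ is an exact rainbow-free $4$-coloring with color classes $C_1,C_2,C_3,C_4$. Fixing distinct colors $a,b$ and a third color $e\notin\{a,b\}$, rainbow-freeness forces $(C_a+C_b)\cap C_e=\varnothing$; ranging over both choices of $e$ yields the key containment $C_a+C_b\subseteq C_a\cup C_b$ for every pair $a\neq b$. Equivalently, whenever $c(x)\neq c(y)$ the color $c(x+y)$ must equal $c(x)$ or $c(y)$; in particular, fixing any $b$, the color $c(b)$ is $b$-dominant in the sense of Lemma \ref{dom}, so along each arithmetic progression with common difference $b$ the non-$c(b)$ colors occur in monochromatic runs separated by occurrences of $c(b)$.

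The main obstacle is to convert the containment $C_a+C_b\subseteq C_a\cup C_b$ into a contradiction. I would feed it into the Cauchy--Davenport inequality: since the four classes are nonempty, $|C_a|+|C_b|\leq p-2$, so $|C_a+C_b|\geq |C_a|+|C_b|-1$, while the containment gives $|C_a+C_b|\leq |C_a|+|C_b|$. Hence every pairwise sumset is essentially as small as Cauchy--Davenport permits, and in the tight case Vosper's theorem forces each class of size at least $2$ to be an arithmetic progression, with paired classes sharing a common difference. The delicate part is ruling out all six pairwise configurations simultaneously, together with the separate and somewhat fussy case in which one or more classes are singletons, where the containment $C_a+\{w\}\subseteq C_a\cup\{w\}$ must be analyzed directly (using that $\langle w\rangle=\Z_p$ to avoid a forced full class). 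I expect this simultaneous rigidity analysis to be the crux; the remaining steps are bookkeeping.
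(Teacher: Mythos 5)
This statement is not proved in the paper at all: it is imported verbatim as Theorem 1 of \cite{BKKTTY}, so there is no internal argument to compare yours against, and what follows judges your proposal on its own. Your lower bound is correct and complete: giving $0$ its own color and splitting the antipodal pairs $\{x,-x\}$ monochromatically between two further colors (possible since $p\geq 5$) is rainbow-free, because a rainbow solution would have to use the color of $0$, and $x_3=0$ forces $c(x_1)=c(x_2)$ while $x_1=0$ or $x_2=0$ forces two variables to coincide. Your derivation of the containment $C_a+C_b\subseteq C_a\cup C_b$ from rainbow-freeness is also correct (distinctness of the three elements is automatic since they would carry three distinct colors).

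The genuine gap is everything after that, which is where the entire content of the theorem lies. Two concrete problems. First, Cauchy--Davenport plus the containment only yields $|C_a|+|C_b|-1\leq |C_a+C_b|\leq |C_a|+|C_b|$, and Vosper's theorem addresses only the exactly-tight alternative $|C_a+C_b|=|C_a|+|C_b|-1$; in the other case $|C_a+C_b|=|C_a|+|C_b|$ it says nothing, and you would need a substantially deeper inverse result (a Hamidoune--R{\o}dseth or Freiman-type characterization of sets with $|A+B|\leq|A|+|B|$ in $\Z_p$), which carries its own exceptional cases. Second, even where AP structure is available, you never perform the case analysis: you declare that ruling out the six pairwise configurations ``is the crux'' and that you ``expect'' it to work out. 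A proof cannot defer its crux. For the record, the all-tight, all-large case does close: classes of size at least $3$ have their common difference determined up to sign, so after scaling all classes are arcs; the sumset of two disjoint arcs $I_a,I_b$ is an arc of size $|I_a|+|I_b|-1$, which, being connected, can lie inside $I_a\cup I_b$ only if the two arcs are cyclically adjacent, and four disjoint arcs cannot be pairwise adjacent. But the off-tight case, the classes of size $1$ and $2$ (where AP structure is vacuous), and the interactions among all of these remain untreated; your singleton observation that $u+v\in\{u,v\}$ forces $0\in\{u,v\}$ disposes only of pairs of singletons. As written, the upper bound --- and hence the theorem --- is unproven.
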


\begin{rem}
It can be deduced through inspection that $\rb(\Z_2,1)=\rb(\Z_3,1)=3$. 
\end{rem}

This result is important for us because $\rb(\Z_p,S)=\rb(\Z_p,1)$ except when $p=3$. 
In the case that $p=3$, $\rb(\Z_p,1)=\rb(\Z_p,S)-1$. 

\begin{thm}[Theorem 2 in \cite{BKKTTY}]\label{schurresult}
For a positive integer $n$ with prime factorization $n=p_1\cdot p_2\cdots p_k$,
\[\rb(\Z_n,1)=2(1-k)+\sum_{i=1}^k\rb(\Z_{p_i},1).\]
\end{thm}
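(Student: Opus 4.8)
The plan is to prove the identity by induction on $k$, showing that $\rb(\Z_n,1)$ obeys the same multiplicative recursion as the Sidon number does in this paper, but now \emph{without} the exceptional behaviour at $p=3$. Concretely, I would establish the two-sided recursion $\rb(\Z_{pt},1)=\rb(\Z_p,1)+\rb(\Z_t,1)-2$ for every prime divisor $p$ of $n$ (with $t=n/p$) and then unfold it: the base case $k=1$ is exactly Theorem \ref{schurprimes} together with the remark $\rb(\Z_2,1)=\rb(\Z_3,1)=3$, since for a single prime the claimed formula collapses to the identity $\rb(\Z_p,1)=\rb(\Z_p,1)$. Each side of the recursion is proved by a different method, mirroring the two subsections of Section \ref{Zn}.

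For the lower bound I would mirror the proof of Lemma \ref{lbcon}. Given a rainbow-Schur-free $r$-coloring $c$ of $\Z_t$, I would blow it up to $\Z_{pt}$ via $\hat c(x)=c(x/p)$ on the subgroup $\langle p\rangle\cong\Z_t$, using one fresh color on all non-multiples of $p$ when $p\leq 3$, and two fresh colors (separating $x\equiv\pm 1$ from the rest) when $p\geq 5$. A Schur triple $x_1+x_2=x_3$ has either all three elements in $\langle p\rangle$ or at most one (two forces the third), and the all-three case is killed by rainbow-freeness of $c$ on $\langle p\rangle$; otherwise there are at least two non-multiples, and the residues modulo $p$ (together with the $\pm1$ grouping, or simple pigeonhole when $p\leq 3$) force two of them into a common fresh color, exactly as in Lemma \ref{lbcon}. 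Iterating adds $\rb(\Z_p,1)-2$ colors per prime, yielding $\rb(\Z_n,1)\geq 2(1-k)+\sum_i\rb(\Z_{p_i},1)$.

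For the upper bound I would mirror Lemma \ref{ublemma}, and here the Schur relation actually simplifies the key step. Partition $\Z_n$ into the cosets $R_i$ of $\langle t\rangle$, so that $R_0=\langle t\rangle\cong\Z_p$; the restriction of a rainbow-free $c$ to $R_0$ is rainbow-free on $\Z_p$, giving $|c(R_0)|\leq\rb(\Z_p,1)-1$. The analog of Lemma \ref{maximumcoset} is now immediate with $R_0$ itself as the distinguished coset: if some $R_i$ held two colors $\gamma_1\neq\gamma_2$ outside $c(R_0)$, realized by $x,z\in R_i$, then $y:=z-x\in\langle t\rangle=R_0$ is nonzero and $x+y=z$ is a rainbow Schur triple (as $c(y)\in c(R_0)$ avoids $\gamma_1,\gamma_2$), a contradiction; hence $|c(R_i)\setminus c(R_0)|\leq 1$ for all $i$. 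I would then define $\hat c$ on $\Z_t$ by sending $i$ to the lone fresh color of $R_i$ or to a dummy color $\alpha$ (note $\hat c(0)=\alpha$ always), and show $\hat c$ is rainbow-Schur-free. Given a putative rainbow $a+b=d$ in $\Z_t$, at most one of $a,b,d$ carries $\alpha$; realizing the fresh colors of two of the cosets and solving the relation for the third produces a Schur triple $y_1+y_2=y_3$ in $\Z_n$ whose remaining element lies in a coset whose palette is either contained in $c(R_0)$ or consists of $c(R_0)$ plus one distinct fresh color, so its color avoids the two chosen fresh colors — a rainbow Schur solution in $\Z_n$, contradiction. Counting $|c(\Z_n)|=|c(R_0)|+(|\hat c(\Z_t)|-1)$ then gives $\rb(\Z_n,1)\leq\rb(\Z_p,1)+\rb(\Z_t,1)-2$, and iterating recovers the matching bound; combining the two directions yields the equality.

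The main obstacle is the induced-coloring step of the upper bound: because $x_1+x_2=x_3$ is asymmetric (unlike the balanced Sidon relation handled in Lemma \ref{ublemma}), I must split on \emph{which} of $a,b,d$ receives $\alpha$ and correspondingly solve either $y_3=y_1+y_2$ or $y_1=y_3-y_2$, verifying in each case that the unchosen element's color avoids both selected fresh colors. By contrast the maximum-coset step is \emph{easier} here than in the Sidon setting, since differences of same-coset elements land inside the subgroup $R_0$ and directly furnish a Schur triple, removing the need to single out a color-maximizing coset. Finally, because there is no $\rb(\Z_3,S)=4$ anomaly for Schur (here $\rb(\Z_3,1)=3$), the per-prime contribution $\rb(\Z_p,1)-2$ is uniform, so the lower and upper bounds coincide with no separate optimization over which prime to isolate.
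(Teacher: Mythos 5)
This statement is never proved in the paper: it is imported verbatim as Theorem~2 of \cite{BKKTTY} and used as a black box (via Observation~\ref{schurobs}) inside the proof of Lemma~\ref{ublemma2}, so there is no in-paper argument to compare against, and your proposal has to stand on its own. It does: the two-sided recursion $\rb(\Z_{pt},1)=\rb(\Z_p,1)+\rb(\Z_t,1)-2$ is established correctly. On the lower-bound side, the key structural fact is that a Schur triple has $0$, $2$, or $3$ elements outside $\langle p\rangle$ (two inside forces the third inside); the all-inside case is killed by rainbow-freeness of $c$, the three-outside case by pigeonhole on the two fresh colors, and the two-outside case by the negation-symmetry of the $\{\pm 1\}$ versus ``other nonzero residue'' partition (if $x_3\in\langle p\rangle$ then $x_2\equiv -x_1 \bmod p$; if $x_1\in\langle p\rangle$ then $x_2\equiv x_3 \bmod p$), so the construction is genuinely rainbow free and each prime contributes $\rb(\Z_{p_i},1)-2$ colors. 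On the upper-bound side, your observation that the analogue of Lemma~\ref{maximumcoset} holds with $R_0$ itself as distinguished coset is not just a simplification but a necessity: the Schur equation, unlike Sidon, is not translation invariant, so the paper's shifting step in Lemma~\ref{ublemma} (replacing $c$ by $c'(x)=c(x+j)$) would be illegitimate here, and your difference-of-same-coset-elements argument is exactly the right substitute. Your asymmetric case split on which of $a,b,d$ carries $\alpha$ is complete, since in each case the remaining element lies in a coset whose palette avoids the two selected fresh colors. Two points you use silently and should state explicitly: (i) a rainbow-free \emph{exact} coloring uses at most $\rb-1$ colors (monotonicity under merging color classes), needed both for $|c(R_0)|\leq \rb(\Z_p,1)-1$ and $|\hat c(\Z_t)|\leq \rb(\Z_t,1)-1$; and (ii) $\alpha\in\hat c(\Z_t)$, witnessed by $\hat c(0)=\alpha$, which your color count $|c(\Z_n)|=|c(R_0)|+|\hat c(\Z_t)|-1$ relies on. Also, the base case $k=1$ is a trivial identity and does not need Theorem~\ref{schurprimes}; the values $\rb(\Z_2,1)=\rb(\Z_3,1)=3$ and $\rb(\Z_p,1)=4$ are only needed to convert your per-prime increments ($1$ or $2$ fresh colors) into the uniform term $\rb(\Z_{p_i},1)-2$. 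Whether your argument coincides with the one in \cite{BKKTTY} cannot be judged from this paper, but it is a correct, self-contained derivation with the pleasant feature that it runs entirely on the machinery Section~\ref{Zn} develops for the Sidon equation.
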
 

The following fact is immediate from Theorems \ref{schurprimes} and \ref{schurresult}, and Proposition \ref{lb1}:

\begin{obs}\label{schurobs}
If $3$ divides $n$, then $rb(\Z_n,S)\geq 1+\rb(\Z_n,1)$.
\end{obs}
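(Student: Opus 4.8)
The plan is to combine the three cited results to obtain the inequality $\rb(\Z_n,S)\geq 1+\rb(\Z_n,1)$ whenever $3\mid n$. First I would write $n$ with its prime factorization $n=p_1\cdots p_k$ in nondecreasing order, and invoke Proposition \ref{lb1} to bound $\rb(\Z_n,S)$ from below. Concretely, since $3\mid n$ and the primes are sorted, the index $m$ (the first index with $p_m\geq 3$) satisfies $p_m\leq 3$, and because $3$ divides $n$ we actually have $p_m=3$ unless $n$ is a power of $2$ times something — but $3\mid n$ forces $p_m=3$ when no prime $2$ precedes... more carefully, $m$ is the smallest index with $p_m\geq 3$, and since $3\mid n$ there is a factor equal to $3$, so $p_m=3$. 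Thus $\rb(\Z_{p_m},S)=\rb(\Z_3,S)=4$ by the convention stated in the introduction, and Proposition \ref{lb1} gives
\[\rb(\Z_n,S)\geq 4+f_1+2f_2,\]
where $f_1=|\{p_i:p_i\leq 3,\ i\neq m\}|$ and $f_2=|\{p_i:p_i\geq 5,\ i\neq m\}|$.

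Next I would compute $\rb(\Z_n,1)$ exactly using Theorem \ref{schurresult} together with the prime values $\rb(\Z_p,1)$. By Theorem \ref{schurprimes} and the remark, $\rb(\Z_2,1)=\rb(\Z_3,1)=3$ and $\rb(\Z_p,1)=4$ for $p\geq 5$. Plugging into $\rb(\Z_n,1)=2(1-k)+\sum_{i=1}^k\rb(\Z_{p_i},1)$, each prime $p_i\leq 3$ contributes $\rb(\Z_{p_i},1)-2=1$ to the sum over and above the $2(1-k)$ correction, while each prime $p_i\geq 5$ contributes $\rb(\Z_{p_i},1)-2=2$. Writing $a=|\{i:p_i\leq 3\}|$ and $b=|\{i:p_i\geq 5\}|$ (so $a+b=k$), this yields
\[\rb(\Z_n,1)=2+a+2b.\]
Since the prime $p_m=3$ is counted in $a$, we have $a=f_1+1$ and $b=f_2$, so $\rb(\Z_n,1)=2+(f_1+1)+2f_2=3+f_1+2f_2$.

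Finally I would combine the two computations. The lower bound from Proposition \ref{lb1} reads $\rb(\Z_n,S)\geq 4+f_1+2f_2$, and the Schur count gives $\rb(\Z_n,1)=3+f_1+2f_2$, so
\[\rb(\Z_n,S)\geq 4+f_1+2f_2=(3+f_1+2f_2)+1=\rb(\Z_n,1)+1,\]
which is exactly the desired inequality. I do not anticipate a genuine obstacle here, since the observation is explicitly stated to be \emph{immediate} from the cited results; the only care required is bookkeeping. The one subtle point worth double-checking is the role of the distinguished index $m$ in Proposition \ref{lb1}: one must confirm that $3\mid n$ indeed forces $p_m=3$ (so that $\rb(\Z_{p_m},S)=4$ rather than $\rb(\Z_2,S)=3$) and that the prime $3$ at index $m$ is correctly accounted for in the Schur formula but excluded from $f_1$. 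Reconciling these two indexing conventions — the $f_1,f_2$ bookkeeping in Proposition \ref{lb1} versus the unrestricted sum in Theorem \ref{schurresult} — is the only place a sign or off-by-one error could creep in, so that is the step I would verify most carefully.
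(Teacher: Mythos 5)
Your proof is correct and follows exactly the route the paper intends: the paper states this observation as ``immediate'' from Theorems \ref{schurprimes} and \ref{schurresult} and Proposition \ref{lb1} without writing out the bookkeeping, and your calculation ($p_m=3$ when $3\mid n$, so $\rb(\Z_n,S)\geq 4+f_1+2f_2$, while the Schur formula evaluates to $3+f_1+2f_2$) is precisely that omitted verification, carried out correctly.
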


We use this fact to find solutions to $x_1+x_2=x_3$ in the proof of Lemma \ref{ublemma2}. 
To our knowledge, this is the first time in the literature on rainbow solutions to equations in $\Z_n$ where previously known rainbow numbers  for different equation are employed in the proof.
This method may be useful in proving rainbow numbers for more general $4$-term equations.

\begin{lem}\label{ublemma2}
Let $9$ be a divisor of $n$ and $3t=n$. Then 
\[ \rb(\Z_n,S)\leq \rb(\Z_3,S)+\rb(\Z_t,S)-3.\]
\end{lem}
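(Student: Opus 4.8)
The plan is to mimic the coset reduction used in the proof of Lemma \ref{ublemma}, but to extract one additional color by upgrading the conclusion: instead of only observing that the induced quotient coloring on $\Z_t$ is rainbow Sidon free, I will show that when the base coset uses all three of its available colors the quotient coloring is in fact rainbow \emph{Schur} free, at which point Observation \ref{schurobs} applies and saves the extra color.

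First I would set up exactly as in Lemma \ref{ublemma}. Let $c$ be a rainbow Sidon free $(\rb(\Z_n,S)-1)$-coloring of $\Z_n$, and partition $\Z_n$ into the cosets $R_0,\dots,R_{t-1}$ of $\langle t\rangle$, each of size $3$ since $n=3t$. Using Lemma \ref{maximumcoset} together with a shift, I may assume $R_0$ attains the maximum number of colors and that $|c(R_i)\setminus c(R_0)|\le 1$ for all $i$. Forming the quotient coloring $\hat c$ on $\Z_t$ as in Lemma \ref{ublemma}, where $\hat c(x)$ is the unique new color of $R_x$ if one exists and a fresh symbol $\alpha$ otherwise, counting colors gives $|\hat c(\Z_t)|=|c(\Z_n)|-|c(R_0)|+1$, and Lemma \ref{ublemma} already guarantees $\hat c$ is rainbow Sidon free.

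I would then split on $|c(R_0)|$. If $|c(R_0)|\le 2$, rainbow Sidon freeness of $\hat c$ gives $|\hat c(\Z_t)|\le \rb(\Z_t,S)-1$, and the count immediately yields $|c(\Z_n)|\le \rb(\Z_t,S)$. The substantive case is $|c(R_0)|=3$. Here Lemma \ref{monolem} forces every coset carrying a new color to be \emph{monochromatic} in that color, which is exactly the structural leverage I need. I then claim $\hat c$ is rainbow Schur free on $\Z_t$. To prove this I would take a supposed rainbow solution $x_1+x_2=x_3$ of the Schur equation under $\hat c$ and lift it to a rainbow Sidon solution in $\Z_n$, contradicting the rainbow Sidon freeness of $c$. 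Since the three colors $\hat c(x_1),\hat c(x_2),\hat c(x_3)$ are distinct and only one symbol $\alpha$ exists, at most one of them is $\alpha$, leaving two cases: all three are new colors, or exactly one equals $\alpha$. In either case the cosets assigned new colors are monochromatic, giving free choice of their $\langle t\rangle$-components, while $R_0$ supplies a full palette of three distinct colors. I would complete a quadruple $y_1+y_2=y_3+y_4$ by drawing a fourth element from $R_0$ (and, in the $\alpha$-case, choosing representatives in $R_{x_3}$ and $R_0$ with distinct colors, which is possible precisely because $|c(R_0)|=3$), using the monochromatic freedom in the other cosets to adjust the $\langle t\rangle$-component so the necessary $\pmod 3$ sum condition holds. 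This yields a rainbow Sidon solution, establishing the claim.

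Finally, with $\hat c$ rainbow Schur free and $3\mid t$ (which holds since $9\mid n=3t$), Observation \ref{schurobs} gives $|\hat c(\Z_t)|\le \rb(\Z_t,1)-1\le \rb(\Z_t,S)-2$, so the color count yields $|c(\Z_n)|=|\hat c(\Z_t)|+|c(R_0)|-1\le \rb(\Z_t,S)$, the same bound as in the easy case. Since $\rb(\Z_n,S)=|c(\Z_n)|+1$ and $\rb(\Z_3,S)=4$, this gives $\rb(\Z_n,S)\le \rb(\Z_t,S)+1=\rb(\Z_3,S)+\rb(\Z_t,S)-3$, as required. I expect the main obstacle to be the rainbow-Schur-free claim: the lifting argument requires careful bookkeeping of the $\langle t\rangle$-components to satisfy the sum condition modulo $3$ while keeping all four elements and all four colors distinct, with the $\alpha$-case being the delicate part since the corresponding coset need not be monochromatic.
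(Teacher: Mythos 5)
Your proposal is correct and takes essentially the same approach as the paper's proof: coset reduction to the quotient coloring $\hat c$ on $\Z_t$, Lemma \ref{monolem} to force monochromatic cosets once $|c(R_0)|=3$, Observation \ref{schurobs} to connect the Schur and Sidon rainbow numbers, and the lifting of a rainbow Schur solution in $\Z_t$ to a rainbow Sidon solution in $\Z_n$ using a fourth element drawn from $R_0$. The only differences are organizational — you argue contrapositively (proving $\hat c$ is rainbow Schur free) and split on $|c(R_0)|\le 2$ versus $|c(R_0)|=3$, whereas the paper starts from a coloring with the target number of colors, forces $|c(R_0)|=3$ by counting, and finds the lifted rainbow solution directly as the contradiction.
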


\begin{proof}
Suppose that $c$ is a rainbow Sidon free $r$-coloring of $\Z_n$ where $r=\rb(\Z_3,S)+\rb(\Z_t,S)-3$. 
We choose not not evaluate $\rb(\Z_3,S)$ to $4$ for conceptual clarity. 
It is easier to keep track of and compare the number of colors used in the un-evaluated form.
Without loss of generality, we assume that $|c(R_i)\setminus c(R_0)|\leq 1$ and that $|c(R_0)|\geq |c(R_i)|$ for all $i$ (otherwise, we can shift the coloring to put ourselves in this position).
Let 
\[\hat c(x)=\begin{cases}
i& \{i\}=c(R_x)\setminus c(R_0)\\
\alpha & c(R_x)\subseteq c(R_0)\end{cases}\] be a coloring of $\Z_t$ (so that $\alpha$ is a color not used by $c$).
Notice that $\hat c$ is a rainbow Sidon free $(\rb(\Z_t,S)-1)$-coloring of $\Z_t$. 
This implies that $|c(R_0)|=3$.
By Observation \ref{schurobs}, $\hat c$ admits a rainbow solution to $x_1+x_2=x_3$ in $\Z_t$. 
In particular, suppose that $i+j=k$ such that $\{i,j,k\}$ is rainbow under $\hat c$. 
Without loss of generality, $\hat c(i)\neq \alpha$. 
By construction, if $\hat c(j),\hat c(k)\neq \alpha$, then there exists $s_jt+j$ and $s_kt+k$ such that $c(s_jt+j)=\hat c(j)$ and $c(s_kt+k)=\hat c(k)$.
If either $\hat c(j)=\alpha$ or $\hat c(k)=\alpha$, then let $s_jt+j$ (resp. $s_kt+k$) be some element in $R_j$ (resp. $R_k$).
Furthermore, there exists $s_0t\in R_0$ such that $c(s_0t)\notin \{c(s_jt+j), c(s_kt+k),\hat c(i)\}$. 
By construction, there exists $s_i$ such that \[s_kt+k+s_0t-s_jt-j=s_it+i\in R_i.\] 
Furthermore, $|c(R_i)|=1$ by Lemma \ref{monolem}.
In particular, $c(s_it+i)=\hat c(i)$ and $\{s_it+i,s_jt+j,s_kt+k,s_0t\}$ is a rainbow solution to the Sidon equation under $c$. 
This is a contradiction; therefore, any $r$-coloring of $\Z_n$ is not rainbow Sidon free. 
\end{proof}

Notice that Lemma \ref{ublemma2} is a refinement of Lemma \ref{ublemma}. 
This refinement lets us improve the upper bound in Proposition \ref{ub1} enough to meet the lower bound in Proposition \ref{lb1}.

\begin{main2}
Let $n=p_1\cdots p_k$ be a prime factorization such that $p_i\leq p_j$ whenever $i<j$. 
Let $m$ be the smallest index such that $p_m\geq 3$ (or $m=k$ if this index does not exist),  $f_1=|\{p_i: p_i\leq 3, i\neq m\}|$, and  $f_2=|\{p_i:p_i\geq 5, i\neq m\}|$. 
Then 
\[\rb(\Z_{p_m},S)+f_1+2f_2
= \rb(\Z_n,S).\]
\end{main2}

\begin{proof}
The lower bound is provided by Proposition \ref{lb1}. 
Notice that if $9$ does not divide $n$, then the upper bound in Proposition \ref{ub1} meets the lower bound. 
Therefore, assume that $9$ divides $n$.

Let $\alpha$ be the largest integer such that $3^{\alpha}$ divides $n$. 
To prove the upper bound, iterativly apply Lemma \ref{ublemma2} $\alpha-1$ times to conclude that 
\[\rb(\Z_n,S)\leq (\alpha-1)(\rb(\Z_3,S)-3)+\rb(\Z_{n/3^{\alpha-1}},S).\]
By Proposition \ref{ub1}, 
\[\rb(\Z_n,S)\leq  (\alpha-1)(\rb(\Z_3,S)-3)+\rb(\Z_m,S)+2(-k+\alpha)+\sum_{\substack{p_i\neq 3\\i\neq m}}\rb(\Z_{p_i},S).\]
Let $\beta$ be the largest integer such that $2^\beta$ divides $n$. 
Notice that $\alpha+\beta-1= f_1.$
By regrouping terms and evaluating $\rb(\Z_{p_i},S)$, 
\begin{align*}
    \rb(\Z_n,S)&\leq \rb(\Z_m,S)+(\alpha-1)(\rb(\Z_3,S)-3)+\beta(\rb(\Z_2,S)-2)+ \sum_{\substack{p_i\neq 2,3\\i\neq m}}(\rb(\Z_{p_i},S)-2)\\
    &=\rb(\Z_m,S)+f_1+2f_2.
\end{align*}
This concludes the proof.
\end{proof}

\subsection*{Acknowledgements}
The second author would like to thank Michael Young for his mentorship and helpful discussions while working on this project.
This material is based upon work supported by the National Science Foundation under Grant Number DMS-1839918.


\begin{thebibliography}{100}

\bibitem{AF} M. Axenovich and D. Fon-Der-Flaass, On rainbow arithmetic progressions, {\it Electron. J. Combin.} \textbf{11} (2004), no. 1, Research Paper 1, 7pp. 

\bibitem{AEHNWY} K. Ansaldi, H. El Turkey, J. Hamm, A. Nu'Man, N. Warnberg, and M. Young, Rainbow numbers of $\Z_n$ for $a_1x_1+a_2x_2+a_3x_3=b$, {\it Integers} \textbf{20}:A51 (2020).


\bibitem{BSY} Z. Berikkyzy, A. Schulte, and M. Young. Anti-van der Waerden numbers of $3$-term arithmetic progressions, {\it Electron. J. Combin.} {\bf 24} (2017), \#P2.39.

\bibitem{BKKTTY} E. Bevilacqua, S. King, J. Kritschgau, M. Tait, S. Tebon, M. Young, Rainbow numbers for $x_1+x_2=kx_3$ in $\Z_n$ \textit{Integers} \textbf{20}:A50 (2020).  

\bibitem{BEHHKKLMSWY} S. Butler, C. Erickson, L. Hogben, K. Hogenson, L. Kramer, R. Kramer, J. Lin, R. Martin, D. Stolee, N. Warnberg, and M. Young, Rainbow arithmetic progressions, {\it J. Comb.} {\bf 7} (2016), 595-626. 

\bibitem{FMR} J. Fox, M. Mahdian, and R. Radoi\v ci\'c, Rainbow solutions to the Sidon equation, \textit{Discrete Math.} \textbf{308}:20 (2008), 4773-4778.

\bibitem{HM} M. Huicochea and A. Montejano, The structure of rainbow-free colorings for linear equations on three variables in $\Z_p$, {\it Integers} \textbf{ 15A}, A8, (2015).

\bibitem{JLMNR} V. Jungi\'c, J. Licht (Fox), M. Mahdian, J. Ne\v set\v ril, and R. Radoi\v ci\'c, Rainbow arithmetic progressions and anti-Ramsey results, {\it Combin. Probab. Comput.} {\bf12} (2003), 599–620.

\bibitem{LM} B. Llano and A. Montenjano, Rainbow-free colorings for $x+y=cz$ in $\mathbb{Z}_p$, {\it Discrete Math.} {\bf 312} (2012), 2566-2573.

\bibitem{MS} A. Montejano and O. Serra, Rainbow-free 3-colorings in abelian groups, {\it Electronic Journal of Combinatorics} \textbf{ 34} (2012), 5 pp.

\bibitem{O} K. O'Bryant, A complete annotated bibliography of work related to Sidon sequences, \textit{Electronic Journal of Combinatorics}, DS11:39, 2004.

\bibitem{TT} V. Taranchuk, C. Timmons, The anti-Ramsey problem for the Sidon equation, \textit{Discrete Math.} \textbf{342}:10 (2019), 2856-2866.

\bibitem{TV} T. Tao and V. Vu, Additive Combinatorics, \textit{Cambridge University Press, Cambridge}, 2006. 

\bibitem{Y} M. Young, Rainbow arithmetic progressions in finite abelian groups, {\it J. Comb.} {\bf 9} (2018), 619-629.

\end{thebibliography}
\end{document}